\font\bBB=msbm10
\def\bBR{\mbox{\bBB R}}
\def\bBZ{\mbox{\bBB Z}}
\def\R3{\bBR ^3}
\def\Rn{\bBR ^n}
\def\Z{\bBZ}
\def\Zz{\bBZ_2}
\newcommand*{\defeq}{\mathrel{\vcenter{\baselineskip0.5ex \lineskiplimit0pt
                     \hbox{\scriptsize.}\hbox{\scriptsize.}}}%
                     =}
\DeclareMathOperator{\Tr}{Tr}
\begin{document}
\title {The Fundamental Group of $SO(n)$ Via Quotients of Braid Groups}
\author{
  Ina Hajdini\thanks{American University in Bulgaria, 2700 Blagoevgrad, Bulgaria; current affiliation: Drexel University, Philadelphia, PA 19104.} \hspace{0.07cm} and Orlin Stoytchev\thanks{American University in Bulgaria, 2700 Blagoevgrad, Bulgaria.}}
	
\maketitle

\begin{abstract} 
We describe an algebraic proof of the well-known topological fact that $\pi_1(SO(n)) \cong \bBZ/2\bBZ$. The fundamental group of $SO(n)$ appears in our approach as the center of a certain finite group defined by generators and relations. The latter is a factor group of the braid group $B_n$, obtained by imposing one additional relation and turns out to be a nontrivial central extension by $\bBZ/2\bBZ$ of the corresponding  group of rotational symmetries of the hyperoctahedron in dimension $n$. 
\end{abstract}

\section{Introduction.}

The set of all rotations in $\Rn$ forms a group denoted by $SO(n)$. We may think of it as the group of $n\times n$ orthogonal matrices with unit determinant. As a topological space it has the structure of a smooth $(n(n-1)/2)$-dimensional submanifold of $\bBR^{n^2}$. The group structure is compatible with the smooth one in the sense that the group operations are smooth maps, so it is a Lie group. The space $SO(n)$ when $n\ge 3$ has a fascinating topological property---there exist closed paths in it (starting and ending at the identity) that cannot be continuously deformed to the trivial (constant) path, but going twice along such a path gives another path, which is deformable to the trivial one. For example, if you rotate an object in $\R3$ by $2\pi$ along some axis, you get a motion that is not deformable to the trivial motion (i.e., no motion at all), but a rotation by $4\pi$ is deformable to the trivial motion. Further, a rotation by $2\pi$ along any axis can be deformed to a rotation by $2\pi$ along any other axis. We shall call a {\it full rotation} in $\Rn$ any motion that corresponds to a closed path in $SO(n)$, starting and ending at the identity. Thus, it turns out that there are two classes of full rotations: topologically trivial, i.e., deformable to the trivial motion, and topologically nontrivial. Every nontrivial full rotation can be deformed to any other nontrivial full rotation. Two consecutive nontrivial full rotations produce a trivial one. \par
For any topological space, one can consider the set of closed paths starting and ending at some fixed point, called {\it base-point}. Two closed paths that can be continuously deformed to each other, keeping the base-point fixed, are called \textit{homotopic}. One can multiply closed paths by {\it concatenation}, i.e., take the path obtained (after appropriate reparametrization) by traveling along the first and then along the second. There is also an inverse for each path--the path traveled in reverse direction. These operations turn the set of homotopy classes of closed paths (with a given base-point) into a group and it is an important topological invariant of any topological space. It was introduced by Poincar\'{e} and is called the {\it first homotopy group} or the \textit{fundamental group} of the space, denoted by $\pi_1$. Thus, the property of $SO(n)$ stated above is written concisely as $\pi_1(SO(n)) \cong \bBZ/2\bBZ\equiv \Zz$. \par
This specific topological property in the case $n=3$ plays a fundamental role in our physical world. To the two homotopy classes of closed paths in $SO(3)$ correspond precisely two principally different types of elementary particles: bosons, with integer spin, and fermions, with half-integer spin, having very distinct physical properties. The difference can be traced to the fact that the complex (possibly multicomponent) wave function determining the quantum state of a boson is left unchanged by a rotation by $2\pi$ of the coordinate system while the same transformation multiplies the wave function of a fermion by $-1$. This is possible since only the modulus of the wave function has a direct physical meaning, so measurable quantities are left invariant under a full rotation by $2\pi$. However, as discovered by Pauli and Dirac, one needs to use wave functions having this (unexpected) transformation property for the correct description of particles with half-integer spin, such as the electron. The careful analysis showed (\cite{Wig,Barg}) that the wave function has to transform properly only under transformations which are in a small neighborhood of the identity. A "large" transformation such as a rotation by $2\pi$ can be obtained as a product of small transformations, but the transformed wave function need not come back to itself--there may be a complex phase multiplying it. From continuity requirements it follows that if one takes a closed path in $SO(3)$ which is contractible, the end-point wave function must coincide with the initial one. Therefore, a rotation by $4\pi$ should bring back the wave function to its initial value and so the phase factor corresponding to a $2\pi$-rotation can only be $-1$. What we have just described is the idea of the so-called {\it projective} representations of a Lie group, which we have to use in quantum physics. As we see on the example of $SO(3)$, they exist because the latter is not simply-connected, i.e., $\pi_1(SO(3))$ is not trivial. Projective representations of a non-simply-connected Lie group are in fact representations of its covering group. In the case of $SO(3)$ this is the group $SU(2)$ of 
$2\times 2$ unitary matrices with unit determinant. Topologically, this is the three-dimensional sphere $S^3$; it is a double cover of $SO(3)$ and the two groups are locally isomorphic and have the same Lie algebra.
\par 
The standard proof that $\pi_1(SO(n)) \cong \bBZ/2\bBZ$ when $n=3$ uses substantially Lie theory. A $2-1$ homomorphism $SU(2)\rightarrow SO(3)$ is exhibited, which is a local isomorphism of Lie groups. This is the double covering map in question, sending any two antipodal points of $SU(2)$ (i.e., $S^3$) to one point in $SO(3)$. The case $n>3$ reduces to the above result by applying powerful techniques from homotopy theory.
\par
There are several more or less easy geometric methods to unveil the nontrivial topology of $SO(3)$. Among them, a well-known demonstration is the so-called "Dirac's belt trick" in which one end of a belt is fastened, the other (the buckle) is rotated by $4\pi$. Then without changing the orientation of the buckle, the belt is untwisted by passing it around the buckle (see, e.g., \cite{Egan, Palais} for nice Java applets and explanations). A refinement of "Dirac's belt trick" was proposed in \cite{Sto} where an isomorphism was constructed between homotopy classes of closed paths in $SO(3)$ and a certain factor group of the group $P_3$ of pure braids with three strands. This factor group turns out to be isomorphic to $\bBZ/2\bBZ$. The idea is fairly simple and is based on the following experiment: attach the ends of three strands to a ball, attach their other ends to the desk, perform an arbitrary number of full rotations of the ball to obtain a plaited braid. Then try to unplait it without further rotating the ball. As expected, braids that correspond to contractible paths in $SO(3)$ are trivial, while those corresponding to noncontractible paths form a single nontrivial class. \par
While the method of \cite{Sto} is simple and easy to visualize, it has the disadvantage that it does not lend itself to a generalization to higher dimensions. (A geometric braid in $\Rn$ is always trivial when $n>3$.)
The present paper takes a different, more algebraic approach. We study a certain discrete (in fact finite) group of homotopy classes of paths in $SO(n)$, starting at the identity and ending at points which are elements of some fixed finite subgroup of $SO(n)$. It turns out that it is convenient to use the finite group of rotational symmetries of the hyperoctahedron (the polytope in dimension $n$ with vertices $\{(\pm 1,0,...0), (0,\pm 1,...,0),...,(0,0,...,\pm 1)\}$). Each homotopy class contains an element consisting of a chain of rotations by $\pi/2$ in different coordinate planes. These simple motions play the role of generators of our group. Certain closed paths obtained in this way remain in a small neighborhood of the identity (in an appropriate sense, explained later) and can be shown explicitly to be contractible. Thus, certain products among the generators must be set to the identity and we get a group  defined by a set of generators and relations. Interestingly, the number of (independent) generators is $n-1$ and the relations, apart  from one of them, are exactly Artin's  relations for the braid group $B_n$. In this way we obtain for each $n$ a finite group, which is the quotient of  $B_n$ by the normal closure of the group generated by the additional relation. When $n=3$ the order of the group turns out to be 48 and it is the so-called {\it binary octahedral group} which is a nontrivial extension by $\bBZ/2\bBZ$ of the group of rotational symmetries of the octahedron. We may think of the former as a double cover of the latter and this is a finite version of the double cover $SU(2)\rightarrow SO(3)$. The next groups in the series have orders 384, 3840, 46080, etc.; in fact the order is given by $2^nn!$. Note that these groups have the same orders as the Coxeter groups of all symmetries (including reflections as well as rotations) of the respective hyperoctahedra, but they are different. This is analogous to the relationship between $O(n)$ and $SO(n)$ on the one hand, and $Spin(n)$ and $SO(n)$ on the other.\par
It turns out that the subgroup of homotopy classes of closed paths, i.e. $\pi_1(SO(n))$, in each case either coincides or lies in  the center of the respective group and is isomorphic to $\bBZ/2\bBZ$. Factoring by it, the respective rotational hyperoctahedral groups are obtained. We believe that these are new presentations of all rotational hyperoctahedral groups and their double covers. \\

\section{Groups of Homotopy Classes of Paths.}
We will consider paths in $SO(n)$ starting at the identity. In other words, we have continuous functions 
$R:[0,1]\rightarrow SO(n)$, subject to the restriction $R(0)=Id$.
Because the target space is a group, there is a natural product of such paths, i.e. if $R_1$ and $R_2$ are paths of this type, we first translate $R_1$ by the constant element $R_2(1)$ to the path  $R_1R_2(1)$. Then we concatenate $R_2$ with $R_1R_2(1)$. Thus, by $R_1R_2$ we mean the path: \\
\begin{equation} 
\text{$(R_1R_2)(t)$}=\begin{cases}
\text{$R_2(2t)$}, & \text{if $t<{1\over 2}$} \\
\text{$R_1(2t-1)R_2(1)$}, & \text{if $t\ge{1\over 2}$} \\
\end{cases}\nonumber
\end{equation}
For each $R$ we denote by $R^{-1}$ the path given by
$$R^{-1}(t)=(R(t))^{-1}\ .$$
The set formed by continuous paths in $SO(n)$ obviously contains an identity, which is the constant path. The product in this set is not associative since  $(R_1R_2)R_3$ and $R_1(R_2R_3)$ are different paths (due to the way we parametrize them), even though they trace the same curve. Also, $R^{-1}$ is by no means the inverse of $R$. In fact, there are no inverses in this set. However, the set of homotopy classes of such paths (with fixed ends) is a group with respect to the induced operations. 
Further, we will use $R$ to denote the homotopy class of $R$. Now $R^{-1}$ is the inverse of $R$. 
Recall that in algebraic topology one constructs the universal covering space of a non-simply connected space by taking the homotopy classes of paths starting at some fixed point (base-point). Within a given homotopy class, what is important is just the end-point and so the covering space is locally homeomorphic to the initial one. However, paths with the same end-point belonging to different classes are different points in the covering space. Thus, one effectively "unwraps" the initial space. We see that the group we have defined is none other than the universal covering group of $SO(n)$. In the case $n=3$ this is the group $SU(2)$; in general it is the group denoted by $Spin(n)$. \par
Our aim is of course to show that the subgroup corresponding to closed paths is $\bBZ_2$ or equivalently that the covering map, which is obtained by taking the end-point of a representative path, is $2\rightarrow 1$. Since the full group of homotopy classes of paths starting at the identity is uncountable and difficult to handle, the main idea of this paper is to study a suitable discrete, in fact finite, subgroup by limiting the end-points to be elements of the group of rotational symmetries of the hyperoctahedron in dimension $n$. In what follows we denote this group by $G$. (One could perhaps take any large enough finite subgroup of $SO(n)$, but using the hyperoctahedral group seems the simplest.)\par
The further study of $G$ requires algebra and just a couple of easy results from analysis and geometry. We state these here and leave the proofs for the appendix.\par
By a {\it generating path} $R_{ij}$ we will mean a rotation from $0$ to $\pi/2$ in the coordinate plane $ij$.
\newtheorem{lem}{Lemma}
\begin{lem}
Every homotopy class of paths in $SO(n)$ starting at the identity and ending at an element of the rotational hyperoctahedral group contains a representative which is a product of generating paths. 
\end{lem}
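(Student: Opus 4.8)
The plan is to pass to the universal cover and turn the statement into a purely group-theoretic one. Recall (as noted above) that the homotopy classes of paths starting at $Id$ form the universal covering group $Spin(n)$, with the endpoint map $p\colon Spin(n)\to SO(n)$ the covering homomorphism. The classes ending in $G$ are exactly $\widetilde G \defeq p^{-1}(G)$, a subgroup sitting in a central extension $1\to K\to \widetilde G \xrightarrow{p} G\to 1$, where $K=\ker p$ is the fibre over $Id$, i.e.\ the group of homotopy classes of loops at the identity ($K=\pi_1(SO(n))$). Each generating path $R_{ij}$ is an element of $\widetilde G$ (its endpoint $r_{ij}$, the quarter turn in the $ij$-plane, lies in $G$), and ``product of generating paths'' means an element of the subgroup $H\defeq\langle R_{ij}\rangle\le\widetilde G$. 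Thus the Lemma is equivalent to $H=\widetilde G$. Since $H$ is a subgroup, this follows once I establish two things: (i) $p(H)=G$, and (ii) $K\subseteq H$. Indeed, given any class $x\in\widetilde G$, choose $h\in H$ with $p(h)=p(x)$ by (i); then $h^{-1}x\in\ker p=K\subseteq H$ by (ii), so $x\in H$.

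Step (i) is an elementary finite-group computation. The endpoints $r_{ij}$ are the signed permutation matrices sending $e_i\mapsto e_j$, $e_j\mapsto -e_i$ and fixing the other axes; these have determinant $1$, so they lie in $G$, the group of determinant-one signed permutation matrices (order $2^{n-1}n!$). Their permutation parts realize all transpositions $(i\,j)$ and hence all of $S_n$, while $r_{ij}^2$ flips the signs of $e_i$ and $e_j$ simultaneously, so products of the $r_{ij}$ realize every even sign change. Counting shows these together exhaust $G$, giving $p(H)=\langle r_{ij}\rangle=G$.

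Step (ii) is the topological heart and the step I expect to be the main obstacle. Here I would use the two geometric facts stated for the appendix: that the infinitesimal generators $L_{ij}$ of the coordinate rotations span the Lie algebra of $SO(n)$, so that products of coordinate rotations through small angles fill a simply connected neighbourhood $U$ of $Id$. Given a loop $\gamma$ at $Id$, compactness lets me subdivide $[0,1]$ so that each segment, suitably translated, lands in $U$; using the simple connectivity of $U$ and the explicit product parametrisation, I deform each segment rel endpoints to a product of coordinate rotations, so that $\gamma$ becomes homotopic to some $\prod_k R_{i_kj_k}(\theta_k)$ with overall endpoint $Id$. The genuine difficulty is then to trade these arbitrary angles for the fixed quarter turns $R_{ij}=R_{ij}(\pi/2)$ that generate $H$, and to do so \emph{without} smuggling in the value of $\pi_1(SO(n))$, which is precisely what the paper sets out to compute. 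The cleanest way around this is to invoke only the standard (and answer-free) fact that $\pi_1(SO(n))$ is generated by the class of a single full rotation $R_{12}(2\pi)$ in one coordinate plane---obtainable from the fibration $SO(n-1)\to SO(n)\to S^{n-1}$ together with $\pi_1(S^{n-1})=0$ for $n\ge 3$, reducing step by step to the coordinate circle in $SO(3)$---and to observe that $R_{12}(2\pi)=R_{12}^{\,4}\in H$ is literally four quarter turns. Since conjugates and products of such loops are again in the subgroup $H$, this yields $K\subseteq H$ and completes the reduction. I expect the careful verification that this generation statement can be used without circularity (or, if one insists on a fully self-contained subdivision argument, the discretisation of the angles $\theta_k$ down to multiples of $\pi/2$) to be where the real work lies.
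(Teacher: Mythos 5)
Your proof is correct, but it takes a genuinely different route from the paper's. You reduce the lemma to pure group theory in the covering group: with $H=\langle R_{ij}\rangle$ and $\widetilde G=p^{-1}(G)$, it suffices that $p(H)=G$ (a finite computation with signed permutation matrices, which you carry out correctly) and $\ker p\subseteq H$ (which you get from the standard fact that $\pi_1(SO(n))$ is generated by the class of a full coordinate-plane rotation, together with the observation that this class is $R_{12}^{\,4}\in H$). Once you invoke that generation fact, your subdivision-and-discretisation paragraph is dispensable and could simply be cut; note also that $H$ really does consist of products of generating paths, since $R_{ij}^{-1}=R_{ji}$ is itself a generating path. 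The paper argues quite differently and constructively: it introduces $D(X,Y)=\Tr(\mathbf 1-X^TY)$, follows the given path $R$, and each time $R(t)$ becomes equidistant (in the sense of $D$) between the hyperoctahedral element it currently tracks and a new one, it splices in a product of generating paths realizing that transition; the geometric point is that this can always be done so that no vertex leaves its closed half-space, so $R'(t)R(t)^{-1}$ is a local closed path and is contracted by flowing along $-\mathrm{grad}\,D$ (the first half of Lemma 2). Your version is shorter and cleaner, but at two costs. First, the generation fact rests on the exact sequence of the fibration $SO(n-1)\to SO(n)\to S^{n-1}$ --- precisely the ``powerful techniques from homotopy theory'' the paper is structured to avoid; this is not circular (you use only surjectivity of $\pi_1(SO(2))\to\pi_1(SO(n))$, never the order of the image), but it sacrifices the self-contained, elementary character that is the paper's main selling point. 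Second, the paper's construction yields more than the bare statement: the representative it builds is assembled from local pieces, and that locality is reused verbatim later in the appendix to prove the second half of Lemma 2 (that every contractible word reduces via the triangular relations), so the constructive argument would still have to appear somewhere even if Lemma 1 itself were proved your way.
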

Each vertex of the hyperoctahedron lies on a coordinate axis---either in the positive or negative direction --- and determines a closed half-space (of all points having the respective coordinate nonnegative or nonpositive) to which it belongs. Let us call {\it local closed paths} those closed paths in $SO(n)$ for which no vertex of the hyperoctahedron leaves the closed half-space to which it belongs initially.
\begin{lem}
Local closed paths are contractible. A closed path consisting of generating paths is contractible if and only if the word representing it can be reduced to the identity by inserting expressions describing local closed paths.
\end{lem}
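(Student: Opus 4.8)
The plan is to first translate the geometric definition of a \emph{local} path into a simple condition on matrix entries. A vertex $e_i$ of the hyperoctahedron belongs to the half-space $\{x_i \ge 0\}$ and its antipode $-e_i$ to $\{x_i \le 0\}$; requiring that neither leaves its half-space along $R(t)$ amounts, in both cases, to the single inequality $\langle e_i, R(t)e_i\rangle = R(t)_{ii} \ge 0$. Hence a closed path is local precisely when every diagonal entry stays nonnegative, i.e. when it remains in the region $U \defeq \{R \in SO(n): R_{ii}\ge 0,\ i=1,\dots,n\}$. The first assertion then reduces to showing that $U$ is contractible, and this is the promised input from analysis and geometry: I would exhibit a deformation retraction of $U$ onto $\{Id\}$, for instance by retracting each column $R(t)e_i$ toward its axis $e_i$ inside the closed hemisphere $\langle e_i,\cdot\rangle\ge 0$ and re-orthonormalizing, the geometric lemma guaranteeing that the columns stay independent so the procedure never leaves $SO(n)$. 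A loop in the contractible set $U$ is null-homotopic in $SO(n)$, which gives the first assertion; and since inserting a local closed path into a word splices in a contractible loop, such an insertion never changes the homotopy class, so any word reducible to the empty word by such insertions and deletions represents a contractible path. This settles the easy implication.

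For the converse I would set up the combinatorial picture. A word in the generating paths $R_{ij}$ corresponds, via the product of paths defined above, to a path in $SO(n)$ obtained by concatenating translated copies of the generators; the endpoints of its prefixes are exactly the elements of $G$ traversed, so the word is the lift of an edge-path in the Cayley graph of $G$ with the $R_{ij}$ as edges, and a closed word is a cycle in that graph. A local closed path starting and ending at $g\in G$ becomes, after translating back by $g$, a loop lying in $U$; thus ``inserting a local closed path'' is exactly inserting a conjugate $g\,\ell\,g^{-1}$ of a loop $\ell$ living in $U$. I would also record that the translates $gU$, $g\in G$, cover $SO(n)$: for any $R$ one can choose $g\in G$ with $(g^{-1}R)_{ii}\ge 0$, so $R\in gU$, giving a cover of $SO(n)$ by contractible sets.

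Now suppose the closed word $w$ is contractible and fix a null-homotopy $H:[0,1]^2\to SO(n)$ with $H(\cdot,0)=w$, $H(\cdot,1)\equiv Id$ and $H(0,\cdot)=H(1,\cdot)\equiv Id$. Using that $\{gU\}$ is a contractible cover, I would subdivide the square finely enough, by a Lebesgue-number argument for the associated open cover, that $H$ maps each small cell into a single translate $gU$. Reading the homotopy cell by cell, each $2$-cell contributes a boundary loop lying in some $gU$, i.e. a conjugate of a local closed path, and the product of these boundary loops expresses the difference between $w$ and the trivial word. This exhibits $w$ as a product of conjugated local closed paths and hence yields the required reduction of $w$ to the identity by insertions of local closed paths.

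The hard part will be making this reduction rigorous, because the generating paths are \emph{macroscopic} rotations by $\pi/2$ whereas the subdivided homotopy produces small cells. I must reconcile the two by homotoping both $w$ and $H$ to a cellular map for which every edge along a grid line is a translate of a generating path and every cell boundary is a genuine local closed path, expressible as a word in the $R_{ij}$ that stays inside a single $gU$. Establishing this simplicial-type approximation — that the null-homotopy can be replaced by one compatible with the discrete generator structure while keeping each cell inside one contractible translate — is the real obstacle; once it is in place, the algebraic reduction by local closed paths follows formally.
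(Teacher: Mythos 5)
Your first assertion is handled essentially as in the paper: you correctly reduce locality to the diagonal condition $R(t)_{ii}\ge 0$ and then contract the region $U$ where this holds. The paper does this by flowing along the tangential part of $-\mathrm{grad}\,D$ for $D(X)=\Tr(\mathbf 1 - X)$, computing that the only critical point compatible with locality is the identity; your ``retract each column toward $e_i$ and re-orthonormalize'' would need a comparable justification that the procedure is well defined and stays in $SO(n)$, but that is a repairable difference of technique, not of substance. The easy implication of the second assertion is also fine.

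The converse, however, is where the lemma actually lives, and there your proposal has a genuine gap --- one you concede yourself when you call the simplicial-type approximation ``the real obstacle.'' Two ingredients are missing. First, the bridge between the continuous null-homotopy and the macroscopic generators: the paper builds it by showing (this is the content of its Lemma 1 and its proof) that any path ending in the hyperoctahedral group is homotopic to a ``nearest'' word in the generating paths, constructed via the distance function $D$ and a sequence of times $t_1<t_2<\cdots$ at which $R(t)$ becomes equidistant from the next group element; applying this construction at successive parameter values $s_1<s_2<\cdots$ of the null-homotopy produces word-paths $R', R'', \dots$ whose successive quotients $R'^{-1}R''$ are \emph{local} closed paths. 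Your Lebesgue-number subdivision produces cell boundaries lying in translates $gU$, but these are arbitrary small loops, not words in the $R_{ij}$, and nothing in your argument converts them into such words. Second, even granting the subdivision, the lemma asserts that the \emph{word} reduces to the identity by inserting \emph{expressions} (words in the generators) describing local closed paths; the paper therefore needs, and proves by induction on word length, that every local closed path that is itself a word in the generators can be reduced to the empty word by inserting the triangular relators of Equation (\ref{triang}) --- the induction pushes the rightmost letter $R_{ij}$ leftward using the conjugation identities until it cancels, locality forbidding the configuration $R_{ij}R_{ij}$. This purely combinatorial step is entirely absent from your proposal, and without it the conclusion ``the algebraic reduction follows formally'' does not follow: knowing that $w$ is a product of conjugates of null-homotopic loops in contractible sets does not by itself yield a reduction of the word $w$ by insertions of generator-words, which is what the lemma (and the resulting presentation of $G$) requires.
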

\section{The case $\boldsymbol{n=3}$}

We consider three-dimensional rotations separately since the essential algebraic properties of $G$ are present already here and in a sense the higher-dimensional cases are a straightforward generalization. 
It is worth recalling some facts about the group of rotational symmetries of the octahedron in three-dimensions. The octahedron is one of the five regular convex polyhedra, known as Platonic solids. It has six vertices and eight faces which are identical equilateral triangles. Each vertex is connected by an edge to all other vertices except the opposite one. We may assume that the vertices lie two by two on the three coordinate axes. We will enumerate the vertices from 1 to 6 as follows: $1=(1, 0, 0)$, $2=(0, 1, 0)$, $3=(0, 0, 1)$, $4=(0,0,-1)$, $5=(0,-1,0)$ and $6=(-1,0,0)$. The octahedron is the dual polyhedron of the cube and so they have the same symmetry group. The analogous statement is true in any dimension. We find it convenient to think of a spherical model of the octahedron --- the edges connecting the vertices are parts of large circles on the unit sphere (Figure \ref{octah}).\\
\begin{figure}
\centering
\vskip -10mm
\includegraphics[width=60mm]{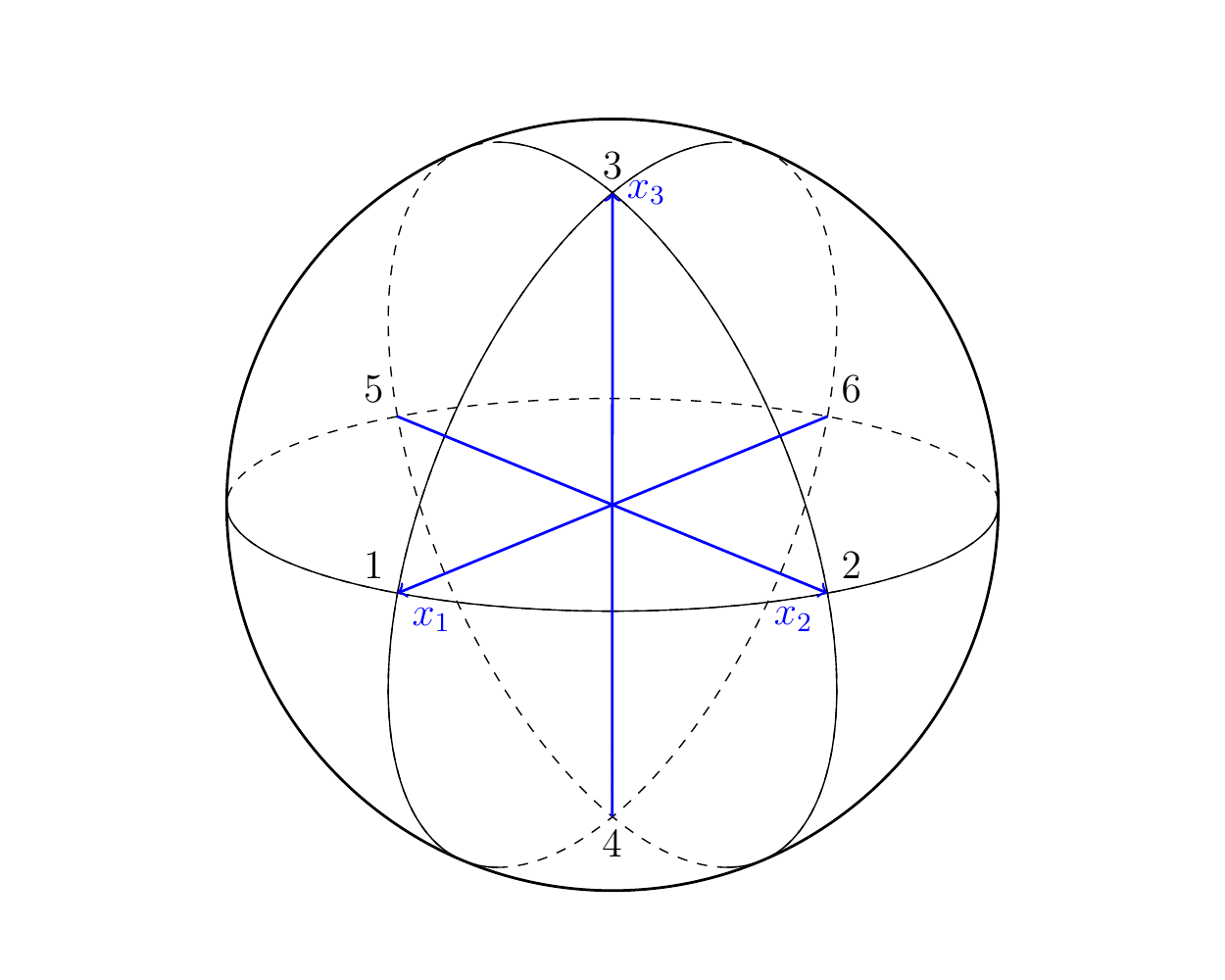}
\caption{A spherical octahedron with its three axes of rotational symmetries of order 4} \label{octah}
\end{figure}
The octahedron has three types of rotational symmetries belonging to cyclic subgroups of different orders:
\\
\emph{Type 1}: Three cyclic subgroups of order $4$ generated by rotations by $\pi/2$  around the three coordinate axes. Each such symmetry preserves the two vertices  lying on that axis while permuting the rest. \\
\emph{Type 2}: Six cyclic subgroups of order $2$, generated by rotations by $\pi$ around axes connecting the centers of six pairs of opposite edges.
 \\
\emph{Type 3}: Four cyclic subgroups of order $3$ generated by rotations by $2\pi/3$ around axes connecting the centers of four pairs of opposite faces.\\
Counting the number of nontrivial elements in each subgroup and adding $1$ shows that the group of rotational symmetries of the octahedron has order $24$. As each symmetry permutes the six vertices, it is obviously a subgroup of the symmetric group $S_6$. It is well-known and easy to see experimentally that the group of rotational symmetries of the octahedron is faithfully represented as the group of permutations of the four pairs of opposite faces. Therefore, it is isomorphic to $S_4$. All symmetric groups can be realized as finite reflection groups and thus finite Coxeter groups of type $A_n$ \cite{Humph}. We have $S_n\cong A_{n-1}$ and in the case at hand the group is $A_3$ --- the full symmetry group (including reflections) of the tetrahedron. The case $n=3$ is an exception. When $n>3$ the respective rotational hyperoctahedral group is not a Coxeter group. For any $n$ however, it is a normal subgroup of index $2$ of the respective full hyperoctahedral group which is a Coxeter group of type $B_n$ (not to be confused with the braid group on $n$ strands, for which the same notation is used).\par
\begin{figure}[h!]
\vskip-10mm
\centering
\includegraphics[width=100mm]{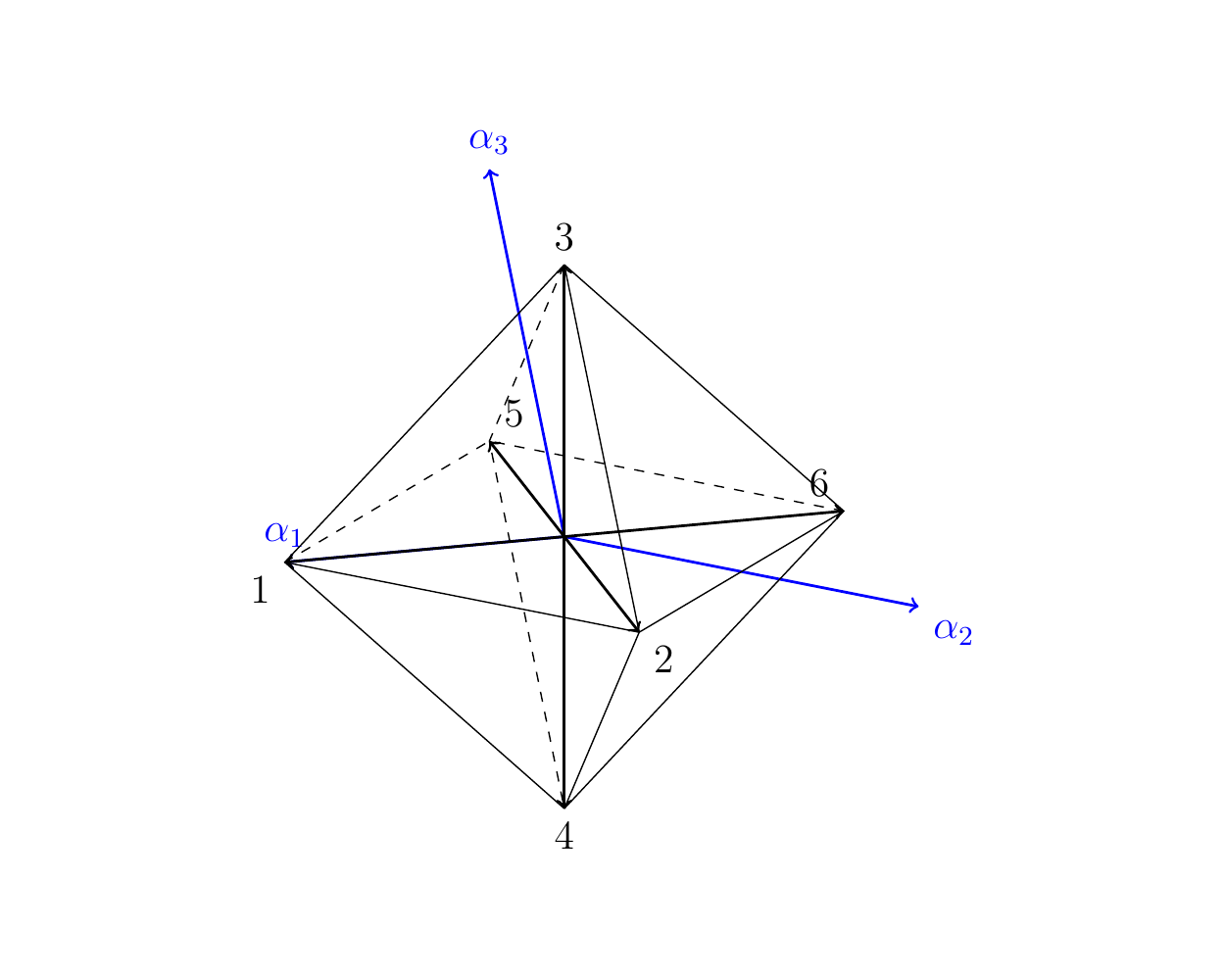}
\vskip-10mm
\caption{Three simple roots for $B_3$} \label{roots}
\end{figure}
For pedagogical purposes we choose to make a short description of the full hyperoctahedral group as a symmetry group of the hyperoctahedron, which is generated by reflections. Starting with the octahedron in dimension 3, we see that it is preserved by reflections with respect to planes perpendicular to the vectors 
$(\pm1,0,0),\,(0,\pm1,0),\,(0,0,\pm1)$ (the six vertices) and $(\pm1,\pm1,0),\\\,(\pm1,0,\pm1),\,(0,\pm1,\pm1)$ (the middles of the twelve edges). The vectors just listed are called {\it roots} and they satisfy two essential properties which are axioms for a {\it root system}: 1) Each reflection determined by a root maps the root system onto itself, and 2) The root system contains together with each root its negative but no other multiples of that root. Among the 18 roots, we can choose (not uniquely) 3 roots, called {\it simple roots} with the following properties: 3) They span the whole space in which the root system lives, and 4) Each root is either entirely positive linear combination of the simple ones or is entirely negative linear combination of those. One possible choice of simple roots is (see Figure \ref{roots})
$$\boldsymbol{\alpha}_1=(1,0,0),\quad\boldsymbol{\alpha}_2=(-1,1,0),\quad\boldsymbol{\alpha}_3=(0,-1,1)\,.$$
One can check that indeed every root is a linear combination of these three with either entirely positive or entirely negative coefficients. For example $(0,1,1)=2\boldsymbol{\alpha}_1+2\boldsymbol{\alpha}_2+\boldsymbol{\alpha}_3$. Notice that the angle between $\boldsymbol{\alpha}_1$ and $\boldsymbol{\alpha}_2$ is $135^{\circ}$ or $3\pi/4$, the angle between $\boldsymbol{\alpha}_2$ and $\boldsymbol{\alpha}_3$ is $120^{\circ}$ or $2\pi/3$, while the angle between $\boldsymbol{\alpha}_1$ and $\boldsymbol{\alpha}_3$ is $90^{\circ}$ or $\pi/2$. This is always the case --- any two simple roots form either obtuse angle or are orthogonal. The angle between two roots is further restricted by the fact that the product of two reflections corresponding to two different roots is a rotation at an angle linked to the angle between the roots. More precisely, if the angle between $\boldsymbol{\alpha}_i$ and $\boldsymbol{\alpha}_j$ is $\pi-\theta$ and the corresponding reflections are denoted as $s_{\alpha_i}$ and $s_{\alpha_j}$, then $s_{\alpha_i}s_{\alpha_j}$ is a rotation by $2\theta$. Since this rotation must have some finite order $m$, we have $\theta={\pi\over m}$. The whole finite reflection group is determined by the positive integers $m(\boldsymbol{\alpha}_i,\boldsymbol{\alpha}_j)$ for each pair of simple roots. Note that because all $\boldsymbol{\alpha}_i$ are reflections and thus have order two, the diagonal entries $m(\boldsymbol{\alpha}_i,\boldsymbol{\alpha}_i)=1$ always. For the octahedron from the angles between simple roots we get  $m(\boldsymbol{\alpha}_1,\boldsymbol{\alpha}_2)=4$, $m(\boldsymbol{\alpha}_2,\boldsymbol{\alpha}_3)=3$, and $m(\boldsymbol{\alpha}_1,\boldsymbol{\alpha}_3)=2$. The information is traditionally encoded in the so-called {\it Coxeter -- Dynkin diagram} where each simple root is represented by a node, two nodes are connected by an edge if $m>2$ and $m$ is written as a label below the corresponding edge if $m>3$. The Coxeter -- Dynkin diagram for the octahedral group is shown on Figure \ref{Dynkin}.\par
\begin{figure}
\centering
\includegraphics[width=30mm]{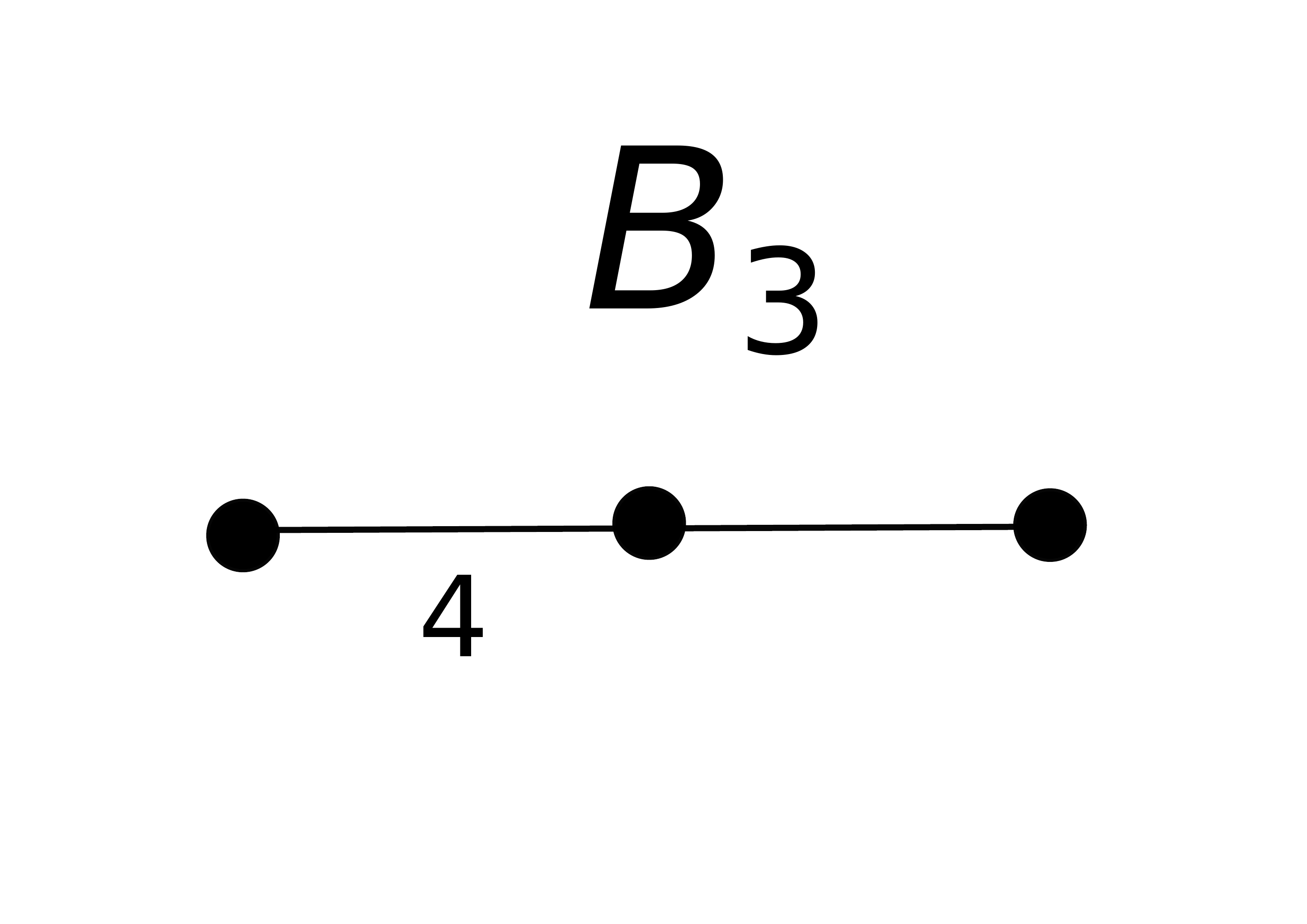}\quad\quad
\includegraphics[width=30mm]{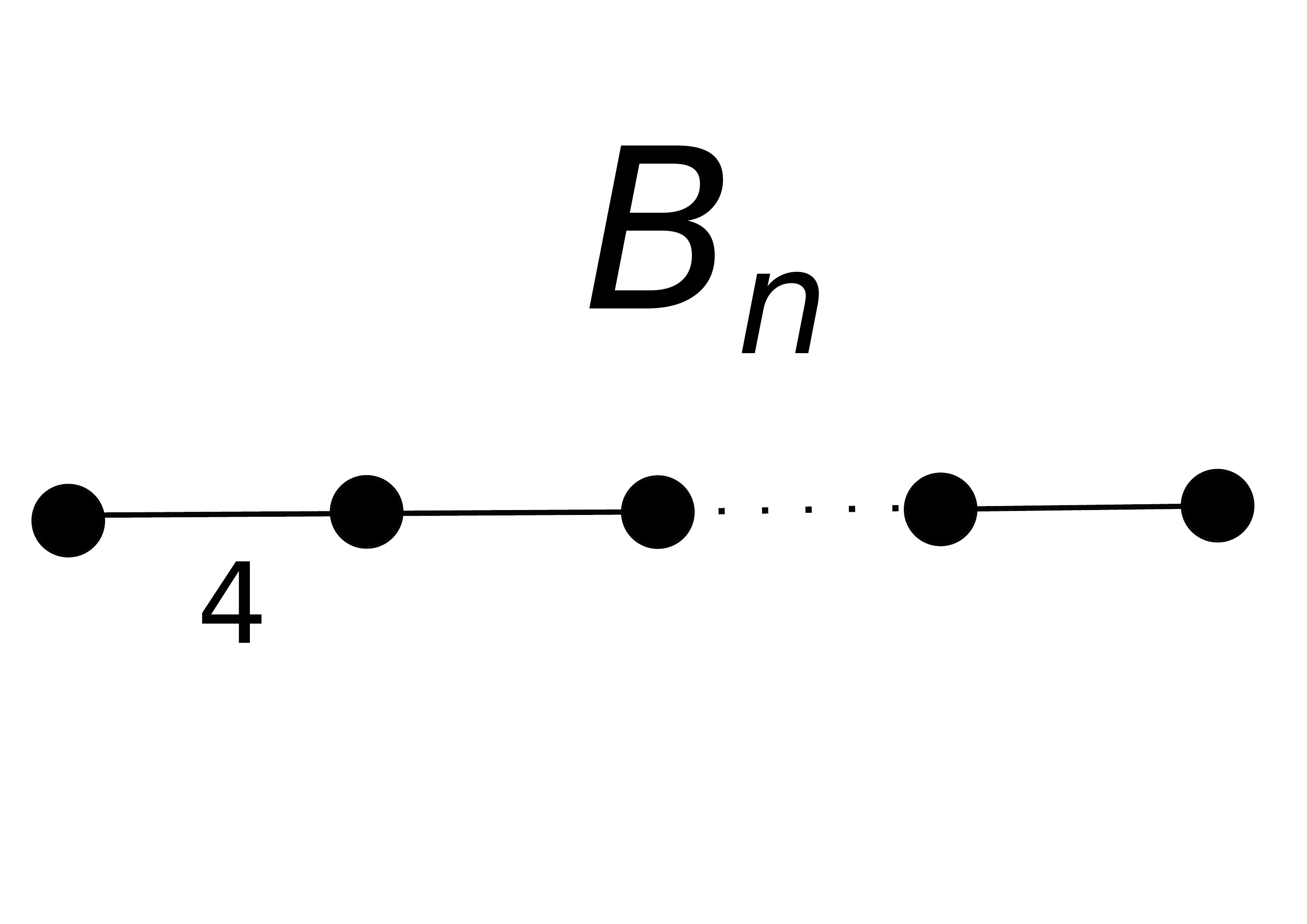}
\caption{The Coxeter-Dynkin diagrams for $B_3$ and $B_n$} \label{Dynkin}
\end{figure}
The generalization to higher $n$ is pretty straightforward. The roots will be all $2n$ vertices and the middles of all $2n(n-1)$ edges of the respective hyperoctahedron. It is easy to figure out that one can choose the following $n$ simple roots: $(1,0,...,0),\,(-1,1,0,...,0),...,(0,...,0,-1,1)$, and calculating the integers $m$, we see that the group has a Coxeter -- Dynkin diagram as in Figure \ref{Dynkin}. The groups of this type are denoted by $B_n$, where $n$ is the number of simple roots and is called the {\it rank}.\par
Alternatively, the full hyperoctahedral group can be thought of as the {\it wreath product} $S_2\wr S_n$. The wreath product in this special case is the semidirect product of the product of $n$ copies of $S_2$ with $S_n$, where $S_n$ acts on the first factor by permuting its components. More precisely, let $\Sigma=S_2\times S_2\times \cdots\times S_2$ and let $\sigma=(\sigma_1,\sigma_2,...,\sigma_n)\in \Sigma$. The symmetric group $S_n$, considered as a permutation group of $(1,2,...,n)$, acts naturally by automorphisms on $\Sigma$, namely, if $h\in S_n$
$$(h\sigma)_i=\sigma_{h^{-1}(i)}\,.$$
Now, the wreath product $S_2\wr S_n$ is just the semidirect product $\Sigma\rtimes S_n$.
The $n$ pairs of opposite vertices determine $n$ mutually orthogonal (non-oriented) lines in $\Rn$. A rotation in the $ij$th plane by $\pi/2$ permutes the $i$th and $j$th lines. An arbitrary symmetry can be realized by an arbitrary permutation of the lines plus possible reflections with respect to the hyperplanes perpendicular to the $n$ lines. This explains the structure of the full hyperoctahedral group as a wreath product. One advantage is that it is easy to see that the order of the group is $2^nn!$.\par
The rotational hyperoctahedral group forms a normal subgroup of index 2 in the full hyperoctahedral group. In terms of orthogonal matrices, this is the subgroup of matrices with unit determinant. Note that each rotation is a product of two reflections.
Coming back to the rotational octahedral group (i.e. the case $n=3$), we observe that it is generated by three elements, denoted by $r_1$, $r_2$ and $r_3$, 
of \emph{Type 1} --- rotations by $\pi/2$ around the three coordinate axes.  They permute the vertices of the octahedron and using standard notations for permutations in terms of cycles (omitting the trivial 1-cycles) we can write: 
$$r_1= (2354),\quad r_2= (1463),\quad r_3= (1265).$$
Obviously, these three elements have order 4, and generate the respective cyclic groups of \emph{Type 1} but they also generate all symmetries of \emph{Type 2} and \emph{Type 3}: \par
The six rotations by $\pi$ around the respective rotational axis (as described above) in terms of $r_1$, $r_2$ and $r_3$ are $r_2r_3r_1r_3^{\,2}$,  $r_2r_3r_1$,  $r_3r_1r_2r_1^{\,2}$,  $r_3r_1r_2$,  $r_1r_2r_3r_2^{\,2}$, and $r_1r_2r_3$. \par
The eight rotations by $\pm2\pi/3$ around the respective rotational axis (as described above) in terms of $r_1$, $r_2$ and $r_3$ are $r_1r_2$, $r_1r_3$, $r_1r_2^{\,3}$, $r_1r_3^{\,3}$, and their inverses. \par
We consider the discrete group $G$ generated by the generating paths $R_1:=R_{23}$, $R_2:=R_{31}$, and $R_3:=R_{12}$, treated as homotopy classes. (We have $R_i(1)=r_i,\ i=1,2,3$.) Local closed paths built out of the generators $R_i$ and their inverses must be set to identity. Apart from trivial cases where an $R_i$ is followed by its inverse, we have a family of paths for which each vertex either goes around the edges of a single triangular face  of the octahedron or moves along an edge and comes back. Inspecting all possible ways in which such "triangular" closed paths can be built, we see that each one is represented by a word of four letters. Each word contains either $R_i$ or $R_i^{-1}$ for each $i$. No word contains twice a given letter but it may contain a letter together with its inverse. In that case this letter and its inverse conjugate one of the other two letters. Here is a list of all identities that follow:
\begin{eqnarray}\label{Id}
1&=&R_3^{-1}R_1^{-1}R_2^{-1}R_1=
R_3R_1^{-1}R_2R_1=
R_2R_3^{-1}R_2^{-1}R_1=
R_2^{-1}R_3R_2R_1\\\nonumber
&=&R_3^{-1}R_2^{-1}R_3R_1=
R_3R_2R_3^{-1}R_1=
R_2^{-1}R_1^{-1}R_3R_1=
R_2R_1^{-1}R_3^{-1}R_1\\\nonumber
&=&R_1R_3R_1^{-1}R_2=
R_1^{-1}R_3^{-1}R_1R_2=
R_3R_1^{-1}R_3^{-1}R_2=
R_3^{-1}R_1R_3R_2 \\\nonumber
&=&R_3^{-1}R_2^{-1}R_1R_2=
R_3R_2^{-1}R_1^{-1}R_2=
R_1R_2^{-1}R_3R_2=
R_1^{-1}R_2^{-1}R_3^{-1}R_2\\\nonumber
&=&R_1^{-1}R_2R_1R_3=
R_1R_2^{-1}R_1^{-1}R_3=
R_2R_3^{-1}R_1R_3=
R_2^{-1}R_3^{-1}R_1^{-1}R_3 \\\nonumber
&=&R_2^{-1}R_1^{-1}R_2R_3=
R_2R_1R_2^{-1}R_3=
R_1^{-1}R_3^{-1}R_2R_3=
R_1R_3^{-1}R_2^{-1}R_3. \\\nonumber
\end{eqnarray}
\begin{figure}[h!]
\centering
\includegraphics[width=80mm]{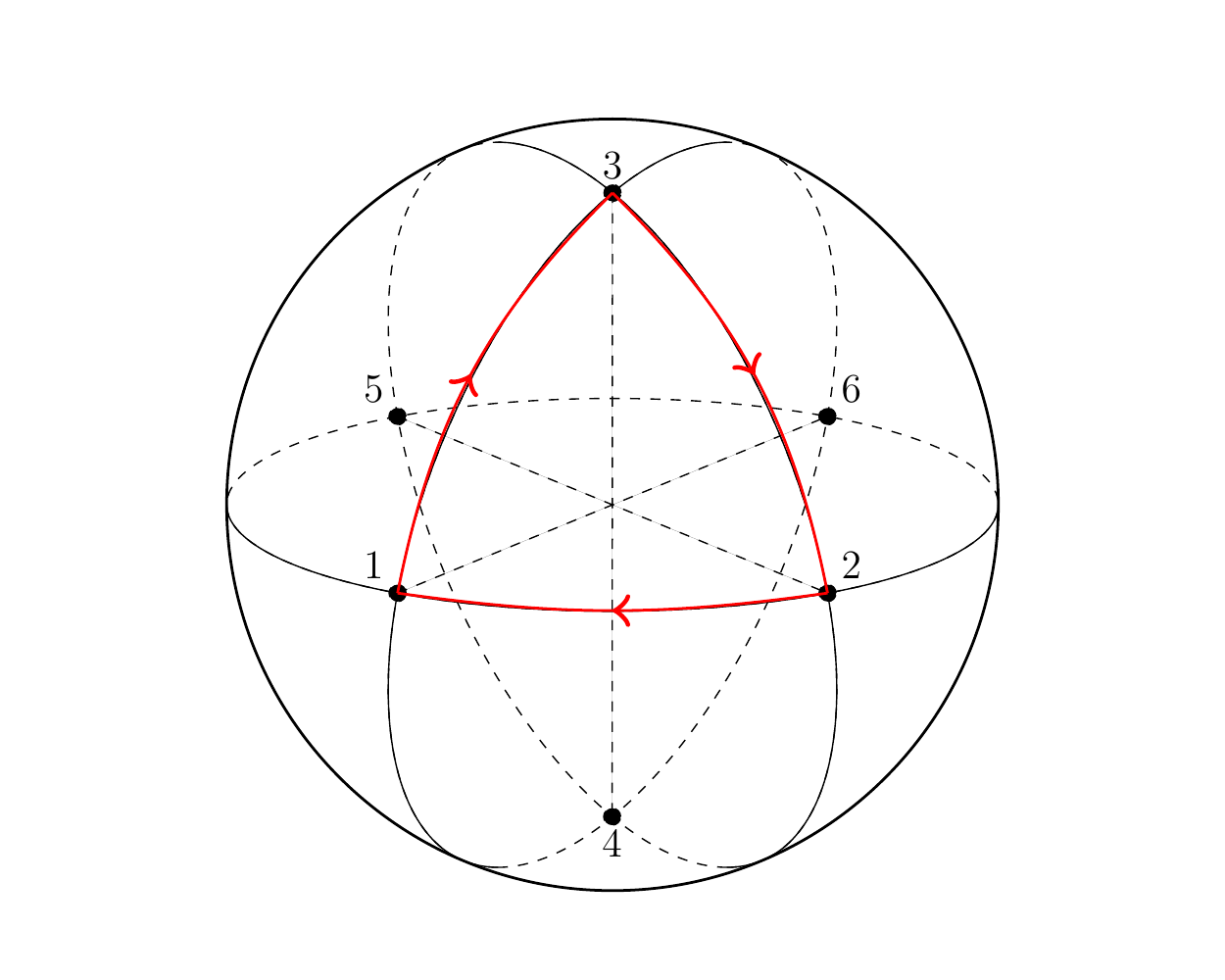}
\caption{Visualization of the triangular closed  path $R_3^{-1}R_1^{-1}R_2^{-1}R_1$ (the path traced by vertex $1$)} \label{triang}
\end{figure}
Very few of these 24 identities are actually independent. First, we notice that one of the generators, e.g. $R_3$ can be expressed as a combination of the other two and their inverses, in several different ways. For example, if we use the first identities in the last two rows we get
$R_3=R_2^{-1}R_1R_2=R_1^{-1}R_2^{-1}R_1$, from which follows
\begin{equation}\label{Artin}
R_2R_1R_2=R_1R_2R_1,
\end{equation}

while if we use $R_3=R_1^{-1}R_2^{-1}R_1=R_1R_2R_1^{-1}$, and also 
$R_3=R_2^{-1}R_1R_2=R_2R_1^{-1}R_2^{-1}$,
we obtain:
\begin{equation}\label{add}
R_1^2=R_2R_1^2R_2,\quad\quad R_2^2=R_1R_2^2R_1.
\end{equation}
All other identities are consequences of these three. Therefore, $G$ is presented as a group generated by two generators and a set of relations, one of which (Equation \ref{Artin}) is precisely Artin's braid relation for the braid group with three strands $B_3$. In other words $G$ is the quotient of $B_3$ by the normal closure of the subgroup generated by the additional relations (\ref{add}).\par
Actually, only one of the identities (\ref{add}) is independent:
\begin{lem}\label{Second}{The second identity in (\ref{add}) follows from the first one and Artin's braid relation (\ref{Artin}).}
\end{lem}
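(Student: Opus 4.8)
The plan is to produce the second relation in (\ref{add}) from the first by conjugating with a single, explicitly chosen group element, rather than by manipulating words letter by letter. The guiding observation is that the braid relation (\ref{Artin}) alone makes the assignment $R_1\leftrightarrow R_2$ an \emph{inner} automorphism of $G$, implemented by the half-twist element $\Delta:=R_1R_2R_1=R_2R_1R_2$ (the two expressions agree precisely by (\ref{Artin})). Since the first identity in (\ref{add}) is carried to the second under the swap $R_1\leftrightarrow R_2$, conjugating it by $\Delta$ should yield exactly what we want.

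First I would verify the swap property using only (\ref{Artin}). A two-line computation gives $\Delta R_1\Delta^{-1}=R_1R_2R_1\cdot R_1\cdot R_1^{-1}R_2^{-1}R_1^{-1}=R_1R_2R_1R_2^{-1}R_1^{-1}=R_2$, where the last step replaces $R_1R_2R_1$ by $R_2R_1R_2$ and cancels. Symmetrically, writing $\Delta=R_2R_1R_2$ and reducing gives $\Delta R_2\Delta^{-1}=R_1$. Both identities follow from (\ref{Artin}) alone, so they hold in $G$ regardless of which relations from (\ref{add}) have been imposed.

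With the swap in hand, I would conjugate the first identity in (\ref{add}) by $\Delta$. Applying the inner automorphism $x\mapsto \Delta x\Delta^{-1}$ to the valid equation $R_2R_1^2R_2=R_1^2$ and using $\Delta R_1\Delta^{-1}=R_2$ and $\Delta R_2\Delta^{-1}=R_1$ term by term turns the left-hand side into $R_1R_2^2R_1$ and the right-hand side into $R_2^2$. This is precisely the second identity in (\ref{add}), completing the argument.

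There is essentially one conceptual point to get right, and it is where the real content lies. One is tempted to argue that, because the presentation of $B_3$ is symmetric under $R_1\leftrightarrow R_2$, the two relations in (\ref{add}) are interchangeable. But that symmetry is an \emph{outer} statement: it only shows that the quotient of $B_3$ by the normal closure of the first relation is isomorphic to the quotient by the normal closure of the second, not that either relation holds in the other quotient. The role of $\Delta$ is exactly to upgrade this symmetry to an inner automorphism, which does survive passage to $G$ and does let one transport a relation to its mirror image inside the same group. Once this is recognized, the remaining steps are routine cancellations using (\ref{Artin}).
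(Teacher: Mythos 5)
Your proof is correct. Both verifications go through: $\Delta R_1\Delta^{-1}=R_1R_2R_1R_2^{-1}R_1^{-1}=R_2R_1R_2R_2^{-1}R_1^{-1}=R_2$ and symmetrically $\Delta R_2\Delta^{-1}=R_1$, using only (\ref{Artin}); conjugating $R_2R_1^{\,2}R_2=R_1^{\,2}$ by $\Delta$ then yields $R_1R_2^{\,2}R_1=R_2^{\,2}$ letter by letter. The paper's own proof is, at bottom, the same mechanism in a different packaging: its first step $R_2^{\,2}=R_1R_2R_1^{\,2}R_2^{-1}R_1^{-1}$ is precisely the statement $R_2^{\,2}=(R_1R_2)R_1^{\,2}(R_1R_2)^{-1}$, i.e., it conjugates by $R_1R_2$ (which sends $R_1$ to $R_2$ but does not swap the generators) and then substitutes the first relation for the middle $R_1^{\,2}$ and cancels. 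Your choice of the half-twist $\Delta=R_1R_2R_1$ as the conjugator is slightly more structural: because $\Delta$ implements the full swap $R_1\leftrightarrow R_2$ as an inner automorphism, it transports \emph{any} relation $P(R_1,R_2)=1$ to its mirror image $P(R_2,R_1)=1$ in one stroke. This is exactly the ``symmetry'' property the paper invokes later (Section 4) for general $n$, and your remark that the presentational symmetry of $B_3$ alone would only give an isomorphism of two different quotients, not a relation inside the same quotient, correctly identifies why the inner implementation is the essential point.
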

\begin{proof} Using Artin's relation twice it is almost immediate that
$$R_2^{\, 2}=R_1R_2R_1^{\, 2}R_2^{-1}R_1^{-1}.$$
Now using the first of the identities (\ref{add}) inside the expression above we get
$$R_2^{\, 2}=R_1R_2R_1^{\, 2}R_2^{-1}R_1^{-1}=R_1R_2R_2R_1^{\, 2}R_2R_2^{-1}R_1^{-1}=R_1R_2^{\, 2}R_1.$$
\end{proof}
\begin{figure}[h!]
\vskip-10mm
\centering
\includegraphics[width=80mm]{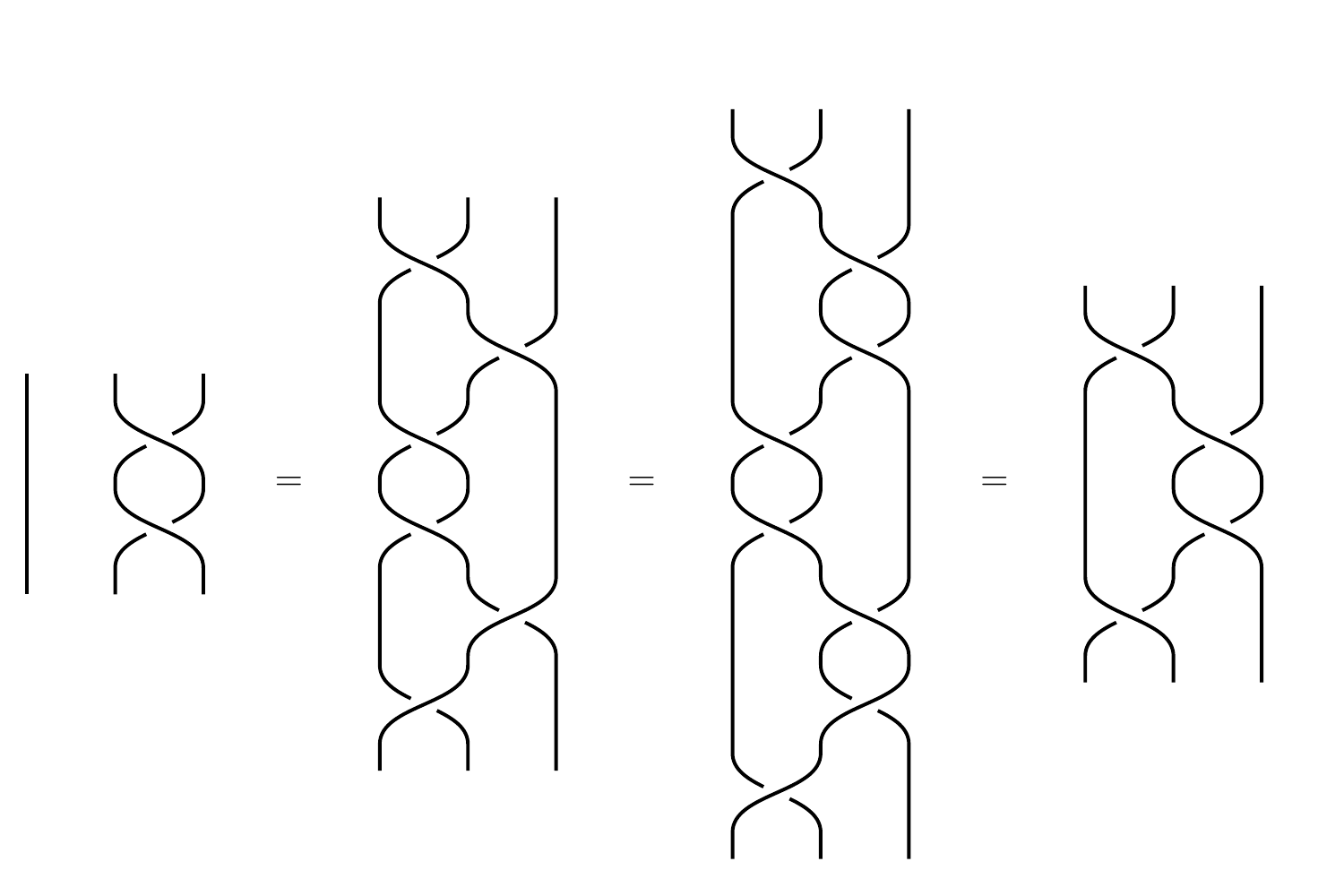}
\caption{Geometric Proof of Lemma \ref{Second} that $R_2^2=R_1R_2^2R_1$ follows from $R_1^2=R_2R_1^2R_2$} \label{braiding}
\end{figure}
\noindent{\bf Note:} As shown by Artin \cite{Artin}, the braid group $B_n$ can be thought as a group of isotopy classes of geometric braids with $n$ strands or as a group generated by $n-1$ generators satisfying what came to be called Artin's braid relations. When $n=3$ there is just one relation (\ref{Artin}) between the two generators. The geometric picture has the advantage of being more intuitive and providing us with ways to see identities, which can then be shown algebraically. Thus, for example, the proof above has a geometric version which is easy to visualize (Figure \ref{braiding}). The same situation will be in place when we consider $n>3$. Our group $G$ will be generated by $n-1$ generators satisfying the standard braid relations plus some additional ones. Using the geometric picture will allow us to arrive at conclusions which are difficult to see algebraically.
\par\smallskip\noindent

{\bf Corollary:} {\it The group $G$ has a presentation}
\begin{equation}\label{Biocta}
G=\left<R_1, R_2\,|\,R_1R_2R_1=R_2R_1R_2, R_1^{\, 2}=R_2R_1^{\, 2}R_2\right>\,.
\end{equation}
\newtheorem{prop}{Proposition}
\begin{prop}\label{R4}
The order of $R_1$ and $R_2$ in $G$ is eight.
\end{prop}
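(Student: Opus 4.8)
The plan is to prove the two bounds ``the order divides $8$'' and ``the order does not divide $4$'' separately. The relations are symmetric under $R_1 \leftrightarrow R_2$ (Artin's relation is symmetric, and alongside $R_1^2=R_2R_1^2R_2$ we have its mirror $R_2^2=R_1R_2^2R_1$ by Lemma \ref{Second}), so the swap extends to an automorphism of $G$ and it suffices to treat $R_1$. Write $a:=R_1$, $b:=R_2$.

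For the upper bound I would derive $a^8=1$ directly from the presentation, after rewriting the extra relations as commutation rules. From $a^2=ba^2b$ one gets $a^2b=b^{-1}a^2$, and applying this twice yields $a^2b^2=b^{-2}a^2$ and then $a^2b^4=b^{-4}a^2$. From the mirror relation $b^2=ab^2a$ one gets $b^2a=a^{-1}b^2$, and applying it twice gives $b^4a=b^2(a^{-1}b^2)=ab^4$, so $b^4$ commutes with $a$ and is therefore central. But centrality of $b^4$ forces $a^2b^4=b^4a^2$, which compared with $a^2b^4=b^{-4}a^2$ and cancellation of $a^2$ gives $b^4=b^{-4}$, i.e. $b^8=1$; by the symmetry above, $a^8=1$. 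Conceptually the element $a^4=b^4=\Delta^2$ (with $\Delta=aba=bab$) is the central class of the $2\pi$-rotation, and the computation above is exactly the statement that this central element squares to the identity.

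For the lower bound I would exhibit a representation in which $R_1$ has order $8$, so that $a^4\neq 1$ in $G$. The natural choice is the unit-quaternion (that is, $SU(2)$) realization of the lifts of the $\pi/2$ rotations: set $A=\tfrac{1}{\sqrt2}(1+\mathbf i)$ and $B=\tfrac{1}{\sqrt2}(1+\mathbf j)$. A short computation gives $A^2=\mathbf i$ and $B^2=\mathbf j$, and one checks both defining relations, $ABA=\tfrac{1}{\sqrt2}(\mathbf i+\mathbf j)=BAB$ and $BA^2B=B\mathbf iB=\mathbf i=A^2$. Hence $a\mapsto A$, $b\mapsto B$ extends to a homomorphism $\rho\colon G\to SU(2)$. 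Since $A^4=-1$ and $A^8=1$, the element $A$ has order $8$, so the order of $a$ in $G$ is a multiple of $8$; in particular $a^4\neq 1$. Combined with $a^8=1$ this pins the order of $a$, and by symmetry that of $b$, to exactly $8$.

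The main obstacle is the algebraic upper bound. The relations individually only make the relevant fourth powers ``almost central,'' and the point is to notice the collision between two facts that both follow from the relations: $b^4$ is genuinely central, while conjugation by $a^2$ sends $b^4$ to $b^{-4}$, so that together they force $b^8=1$. Spotting this cancellation (equivalently, recognizing $a^4=b^4=\Delta^2$ as the order-two central element) is the only nonroutine step; the lower bound then reduces to the mechanical verification of the two quaternion identities above.
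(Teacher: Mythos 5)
Your proof is correct, and it diverges from the paper's in the part that matters most. The upper bound ($R_1^{\,8}=R_2^{\,8}=Id$) is essentially the paper's own computation in a slightly different packaging: the paper derives $R_1^{\,4}=R_2^{-4}$ and $R_1^{\,4}=R_2^{\,4}$ directly from the relations (\ref{Artin}) and (\ref{add}), while you derive that $R_2^{\,4}$ is central yet conjugated to its inverse by $R_1^{\,2}$; both routes use the mirror relation supplied by Lemma \ref{Second} and land on the same cancellation. The genuine difference is the lower bound. The paper explicitly flags "exactly 8, not 4" as the nontrivial point and settles it by running the Todd--Coxeter algorithm on a computer to find $|G|=48$ with $R_1^{\,4}$ nontrivial. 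You instead exhibit a homomorphism $G\to SU(2)$ sending $R_1\mapsto\tfrac{1}{\sqrt2}(1+\mathbf i)$, $R_2\mapsto\tfrac{1}{\sqrt2}(1+\mathbf j)$, verify the two defining relations by quaternion arithmetic, and observe that the image of $R_1$ has order $8$. This is a hand-checkable, self-contained replacement for the computational step; it is in fact the same device the paper deploys later in Proposition \ref{2O} (with the generators $\tfrac{1}{\sqrt2}(1-\mathbf k)$, $\tfrac{1}{\sqrt2}(1-\mathbf j)$) to identify $G$ with the binary octahedral group, but you only need the easy direction --- that the assignment extends to a homomorphism --- so nothing about surjectivity or the order of $G$ is required. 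What the paper's computer calculation buys in exchange is the full order $|G|=48$ and the Cayley graph, which are used downstream; your argument pins down only the order of the generators, which is all Proposition \ref{R4} asks for.
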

\begin{proof}Using relations (\ref{add}) we conclude that $R_1^{\, 2}=R_2R_1^{\, 2}R_2=R_2^{\, 2}R_1^{\, 2}R_2^{\, 2}$ and similarly $R_2^{\, 2}=R_1^{\, 2}R_2^{\, 2}R_1^{\, 2}$. Therefore
$$R_1^{\, 2}=R_2^{\, 2}R_1^{\, 2}(R_1^{\, 2}R_2^{\, 2}R_1^{\, 2}) \Longrightarrow
Id=R_2^{\, 2}R_1^{\, 4}R_2^{\, 2}\Longrightarrow
R_1^{4}=R_2^{-4}.$$
Next, one can write 
$$R_2^{\, 4}=R_2R_2^{\, 2}R_2=R_2R_1R_2^{\, 2}R_1R_2=R_2R_1R_2R_1R_2R_1=R_1R_2R_1^{\, 2}R_2R_1=R_1^{\, 4}.$$
Putting together the results in the last two equations we see that $R_1^{\, 4}=R_1^{-4}$ and similarly $R_2^{\, 4}=R_2^{-4}$, which imply $R_1^{\, 8}=R_2^{\, 8}=Id$.\par
From this we can tell that the order of $R_1$ and $R_2$ is at most 8. To conclude that it is exactly 8 (and not for example 4) is not a trivial problem. One way to do this is to perform the Todd -- Coxeter algorithm (see e.g. Ken Brown's short description \cite{Brown}). If the group defined by a finite set of generators and relations is finite, the algorithm will (in theory) close and stop and will produce the order of the group and a table for the action of the generators on all elements. We used the simple computer program graciously made available by Ken Brown on his site. The resulting table showed that the order of $G$ is 48 and the central element $R_1^{\, 4}=R_2^{\, 4}$ is not trivial. The results are represented graphically in the so-called Cayley graph (Figure \ref{Cayley}) where dotted lines represent multiplication by $R_1$ and solid lines --- by $R_2$ (The dotted lines wrap around horizontally).
\end{proof}
\begin{figure}[h!]
\centering
\includegraphics[width=100mm]{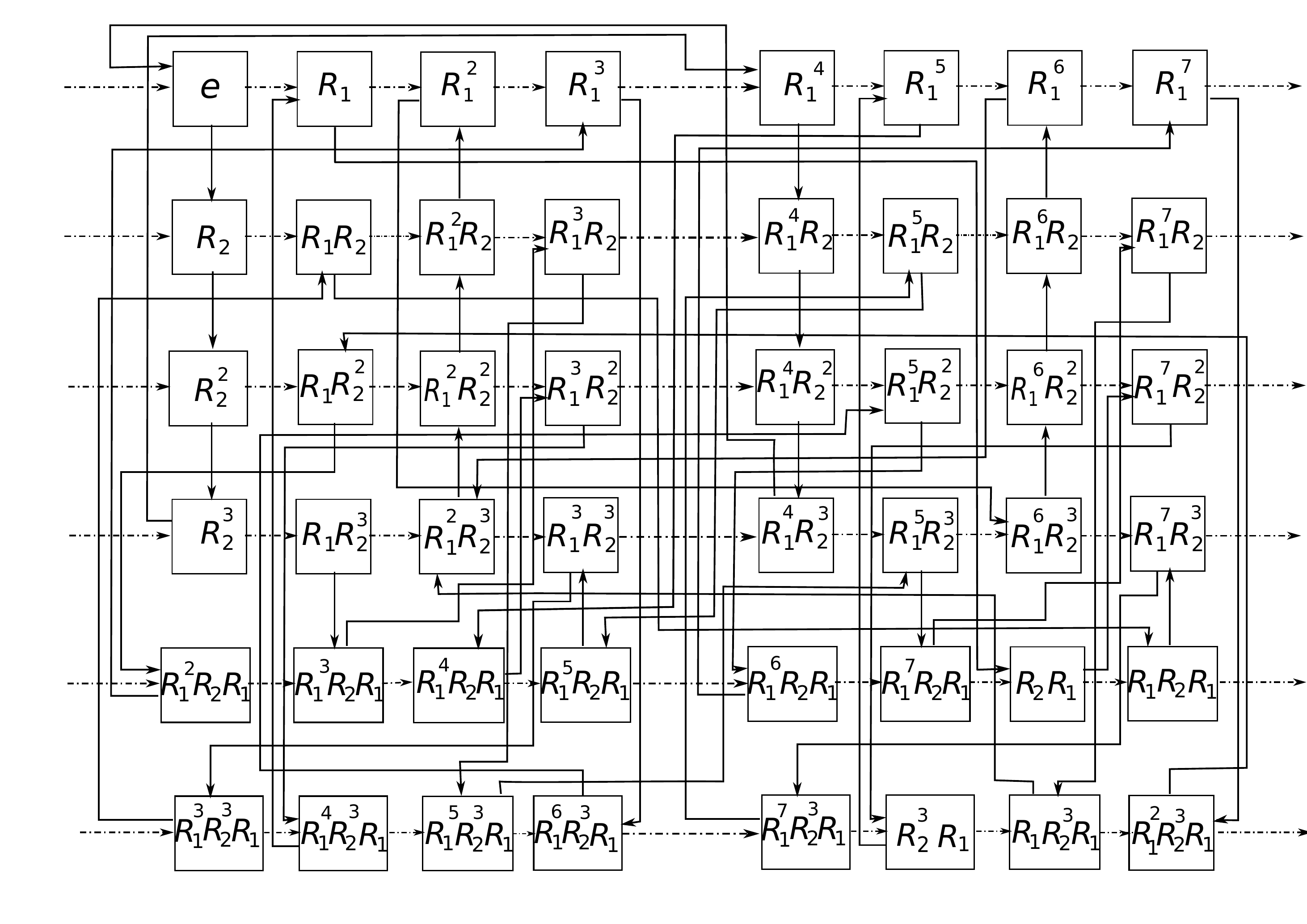}
\caption{The Cayley graph of $G$} \label{Cayley}
\end{figure}
We know that taking the end-point of any element $R\in G$ gives a homomorphism from $G$ onto the group of rotational symmetries of the octahedron and the latter can be considered as a subgroup of the symmetric group $S_6$. So we have a homomorphism $\theta: G \rightarrow S_6$. In particular,
 $\theta(R_1)=(2354)$ and $\theta(R_2)=(1463)$. 
To prove that the fundamental group of $SO(3)$ is $\Zz$, we need to show that the kernel of the homomorphism described above is $\Zz$, because the kernel consists of those elements of $G$ which are classes of closed paths in $SO(3)$. In other words, these are the motions that bring the octahedron back to its original position. It will be helpful to come up with a way to list all elements of $G$ as words in the generators $R_1$, $R_2$ and their inverses, i.e., find a {\it canonical form} for the elements of $G$.
\begin{prop} \label{elements} Any $x\in G$ can be written uniquely in one of the three forms: \\
1. $R_1^{\, m}R_2^{\, n}$ (32 elements), \\
2. $R_1^{\, m}R_2R_1$ (8 elements), \\
3. $R_1^{\, m}R_2^{\, 3}R_1$ (8 elements), \\
where $m\in \{0, 1, 2, 3, . . ., 7\}$ and $n\in \{0, 1 ,2, 3\}$. 
\end{prop}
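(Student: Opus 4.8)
The plan is to establish existence and uniqueness separately, leveraging the fact that $|G| = 48$ (already known from Proposition \ref{R4} and the Todd--Coxeter computation). Write $a = R_1$ and $b = R_2$ for brevity, and let $S = S_1 \cup S_2 \cup S_3$ be the set of $48$ formal words of the three listed shapes ($a^m b^n$, $a^m b a$, $a^m b^3 a$). The crucial remark is that there are exactly $48$ such words, so it suffices to prove that \emph{every} element of $G$ is represented by some word in $S$: surjectivity of the resulting map from a $48$-element set onto the $48$-element group $G$ forces it to be a bijection, and injectivity is precisely the uniqueness assertion. Thus the whole proposition reduces to an existence (surjectivity) statement plus a counting argument.

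To prove that every element is represented, I would show that $S$, viewed as a subset of $G$, is closed under right multiplication by each generator $a$ and $b$. Since $Id = a^0 b^0 \in S$ and $G$ is finite --- so that $a^{-1}$ and $b^{-1}$ are themselves positive powers of $a$ and $b$ --- closure under right multiplication by $a$ and $b$ implies $Id \cdot G \subseteq S$, i.e. $S = G$. The computations draw on a small toolkit derived from the presentation (\ref{Biocta}): the braid relation $aba = bab$; the two commutation rules $b a^2 = a^2 b^{-1}$ and $b^2 a = a^{-1} b^2$, coming respectively from $a^2 = b a^2 b$ and its mirror $b^2 = a b^2 a$ (Lemma \ref{Second}); and the facts $a^4 = b^4$ (central) and $a^8 = b^8 = Id$ proved inside Proposition \ref{R4}, which let me fold any power of $b$ back into the range $\{0,1,2,3\}$ via $b^{\pm 4} = a^{\pm 4}$.

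The core is then a finite case check. Right multiplication of a form-$1$ word by $a$ stays in form $1$ when the $b$-exponent is $0$ (using $a^8 = Id$), lands in form $2$ or $3$ when it is $1$ or $3$, and returns to form $1$ via $b^2 a = a^{-1} b^2$ when it is $2$; right multiplication by $b$ raises the $b$-exponent or, from $b^3$, uses $b^4 = a^4$. The genuinely nontrivial cases are the products of forms $2$ and $3$ with $b$: for form $2$ the braid relation gives $a^m(bab) = a^m(aba) = a^{m+1}ba$, back in form $2$, while for form $3$ one computes $b^3 a b = b^2(bab) = b^2(aba) = a^{-1} b^3 a$ using $b^2 a = a^{-1} b^2$, returning to form $3$. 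Analogous manipulations, using $b a^2 = a^2 b^{-1}$ and $b^3 a^2 = a^2 b^{-3}$ (which reduces to form $1$ after $b^{-3} = a^4 b$), dispose of the remaining right-multiplications by $a$. I expect the main obstacle to be purely organizational: verifying that each of the finitely many products collapses, after the correct application of the braid and commutation relations and reduction of $b$-exponents modulo $b^4 = a^4$, to exactly one of the three canonical shapes. There is no conceptual difficulty once the toolkit is assembled; the risk lies only in checking every case without a slip.
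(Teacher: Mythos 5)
Your proposal is correct, and its computational core coincides with the paper's: both proofs establish existence by showing that the set of canonical words is stable under right multiplication by the generators (inverses being positive powers since the orders are finite), using exactly the same toolkit --- the braid relation, $R_2^{\,2}R_1=R_1^{-1}R_2^{\,2}$ and its mirror, and $R_1^{\,4}=R_2^{\,4}$, $R_1^{\,8}=Id$ from Proposition \ref{R4}; your sample reductions (e.g.\ $R_1^{\,m}R_2^{\,3}R_1^{\,2}=R_1^{\,m+6}R_2$ and $R_1^{\,m}R_2^{\,3}R_1R_2=R_1^{\,m-1}R_2^{\,3}R_1$) are literally the ones displayed in the paper. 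Where you genuinely diverge is uniqueness: the paper argues by inspection, deriving a contradiction such as $R_1^{\,m-k+n-1}=R_2$ from an alleged coincidence of two canonical forms, whereas you count the $32+8+8=48$ formal words and invoke $|G|=48$ to upgrade surjectivity to bijectivity. Your route is cleaner and eliminates the case analysis, at the price of making uniqueness rest entirely on the Todd--Coxeter computation of $|G|$; this is a fair trade here, since even the paper's inspection argument already needs that computation to know the order of $R_1$ is $8$ rather than $4$ (and hence that the exponent ranges are as stated). Both arguments, yours and the paper's, are therefore conditional on the same external fact, and yours packages the dependence more transparently.
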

\begin{proof} Any element of $G$ can be obtained by multiplying the identity by a sequence of the generators and their inverses either on the right or on the left. We are using right multiplication. The idea is to show that 
when multiplying an element $x\in G$, which is written in the form 1, 2 or 3 by any of the two generators of  $G$ or their inverses, we get again an expression of these three types. Since $R_i^{-1}=R_i^7$ it is enough to check the above for positive powers. \par\noindent
The proof is a direct verification using Artin's braid relation and the identity $R_2^{\, 2}R_1=R_1^{-1}R_2^{\, 2}$ (and the symmetric one with $R_1$ and $R_2$ interchanged). Thus, e.g., multiplying the first expression by $R_1$ we get $R_1^{\, m\pm 1}R_2^{\, n}$, if $n$ is even and either expression 2 or 3, if $n$ is odd.\par\noindent
Similarly, multiplying expression 3 by $R_1$ we get

$$
R_1^{\, m}R_2^{\, 3}R_1^{\, 2}=R_1^{\, m+2}R_2^{-3}=R_1^{\, m+2}R_2^{\, 5}=R_1^{\, m+6}R_2
$$
while multiplying it by $R_2$ gives
$$
R_1^{\, m}R_2^{\, 3}R_1R_2=R_1^{\, m}R_2^{\, 2}R_2R_1R_2=R_1^{\, m}R_2^{\, 2}R_1R_2R_1
=R_1^{\, m-1}R_2^{\, 3}R_1\ .
$$
\par\noindent
Uniqueness is proven by inspection. Let us show as an example that $R_1^{\, m}R_2^{\, n}\ne R_1^{\, k}R_2R_1$. Indeed, the assumption that the two are equal leads to the following sequence of equivalent statements:
\begin{eqnarray*}\nonumber
&R_1^{\, m-k}R_2^{\,n}=R_2R_1\Rightarrow   R_1^{\, m-k+1}R_2^{\, n}=R_1R_2R_1=R_2R_1R_2\Rightarrow\\
& R_1^{\, m-k+1}R_2^{\, n-1}=R_2R_1\Rightarrow\dots\Rightarrow
R_1^{\, m-k+n}=R_2R_1\Rightarrow  R_1^{\, m-k+n-1}=R_2.\\
\end{eqnarray*}
The last identity is apparently wrong since no power of $R_1$ can be equal to $R_2$.
\end{proof}
\begin{prop} The kernel of the homomorphism $\theta:G\rightarrow S_6$ is isomorphic to $\Zz$. 
\end{prop}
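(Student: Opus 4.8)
The plan is to prove this by a short counting argument resting on the first isomorphism theorem, together with the explicit central element already at hand. The kernel is, by construction, the set of homotopy classes of \emph{closed} paths among our end-point-constrained paths, so identifying it with $\Zz$ is exactly the payoff we are after.

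First I would pin down the image of $\theta$. Since $\theta(R_1)=r_1$ and $\theta(R_2)=r_2$, the image is the subgroup $\langle r_1,r_2\rangle\le S_6$. Because $R_3=R_2^{-1}R_1R_2$ holds in $G$, we get $r_3=\theta(R_3)=r_2^{-1}r_1r_2\in\langle r_1,r_2\rangle$, so the image already contains $r_1,r_2,r_3$. These three generate the full rotational octahedral group---indeed every \emph{Type 2} and \emph{Type 3} symmetry was written above as a word in $r_1,r_2,r_3$---and that group has order $24$. Since the image is simultaneously a subgroup of the rotational octahedral group (each $r_i$ being an honest rotation of the vertices), it must coincide with it, so $|\mathrm{im}\,\theta|=24$ and $\theta$ is onto the rotational octahedral group.

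Next, using $|G|=48$ from Proposition \ref{elements}, the first isomorphism theorem yields $|\ker\theta|=|G|/|\mathrm{im}\,\theta|=48/24=2$, and any group of order two is isomorphic to $\Zz$. I would then also exhibit the nontrivial element explicitly: the element $R_1^{\,4}=R_2^{\,4}$ is central and nontrivial by Proposition \ref{R4}, and it lies in the kernel because $\theta(R_1^{\,4})=r_1^{\,4}=Id$, as $r_1$ is a rotation by $\pi/2$ and hence of order $4$. Therefore $\ker\theta=\{Id,\,R_1^{\,4}\}\cong\Zz$.

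The only genuine content is the order bookkeeping: one must know that $|G|=48$ exactly and that $R_1^{\,4}\neq Id$, and both of these reduce to the order of $R_1$ being $8$ rather than $4$. That is the one nontrivial input, settled earlier via the Todd--Coxeter computation; granting it, the present proposition is nearly immediate. The main obstacle is thus not in this proof but in that prior fact---without a lower bound on the order of $R_1$ (or equivalently a verified count of the canonical forms), the kernel could a priori collapse to the trivial group. A fully self-contained alternative would bypass the isomorphism theorem and instead apply $\theta$ directly to the $48$ canonical forms of Proposition \ref{elements}, verifying by hand that precisely two of them, $Id$ and $R_1^{\,4}$, map to the identity permutation; this is more laborious but uses nothing beyond the canonical form.
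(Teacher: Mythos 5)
Your proof is correct, but it takes a genuinely different route from the paper's. You argue by counting: the image of $\theta$ contains $r_1,r_2$ and hence $r_3=r_2^{-1}r_1r_2$, so it is the full rotational octahedral group of order $24$; combined with $|G|=48$ from Proposition \ref{elements}, the first isomorphism theorem forces $|\ker\theta|=2$, and $R_1^{\,4}$ is then identified as the nontrivial kernel element. The paper instead works directly with the canonical forms: it applies $\theta$ to each of the three shapes $R_1^{\,m}R_2^{\,n}$, $R_1^{\,m}R_2R_1$, $R_1^{\,m}R_2^{\,3}R_1$ and tracks the single vertex $1$, showing it returns to itself only for $R_1^{\,m}$ with $m\equiv 0 \pmod 4$; this is exactly the ``fully self-contained alternative'' you sketch at the end, except that tracking one vertex makes it far less laborious than checking all $48$ words. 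Both arguments rest on the same nontrivial prior input --- that the $48$ canonical forms are distinct, equivalently that $R_1^{\,4}\neq Id$, which the paper settles via Todd--Coxeter in Proposition \ref{R4} --- and you are right to flag this as the real content. What your route buys is brevity and a clean structural statement (surjectivity onto the order-$24$ group plus an index computation); what the paper's route buys is that it never needs to verify surjectivity or invoke the order of the image, and it pinpoints the kernel elements explicitly rather than only their number. Your identification of the dependency chain is accurate, and the argument as written has no gaps.
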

\begin{proof} We study how the images under $\theta$ of the elements of $G$ permute the numbers $\{1,2,3,4,5,6\}$. First, $\theta(R_1)$ leaves $1$ in place, then $\theta(R_2)$ sends $1$ to $4$, while  $\theta(R_2^{\, 3})$ sends $1$ to $3$ . Finally, recalling that $\theta(R_1)=(2354)$ we see that $1$ is not in the orbit of $4$ or $3$ under the action of  $\theta(R_1)$, so there is no way that $\theta(R_1^{\, m}R_2R_1)$ or $\theta(R_1^{\, m}R_2^{\, 3}R_1)$ can bring $1$ back to itself. Next we look at  $\theta(R_1^{\, m}R_2^{\, n})$. Recall that because $R_2^{\, 4}=R_1^{\, 4}$ and $R_1^{\, 8}=Id$ we have $n\in\{0,1,2 ,3\}$ and $m\in \{0,\dots,7\}$.Unless $\theta(R_2^{\, n})=Id$, which happens if and only if $n=0$, one of the numbers $\{3,4,6\}$ will be sent to $1$. Then, since $1$ is fixed by $\theta(R_1^{\, m})$, we see that  $\theta(R_1^{\, m}R_2^{\, n})$ cannot be trivial unless $n= 0$. In that case it is obvious that $\theta(R_1^{\, m})$ can be the identity only when $m=0\,(\text{mod}\,4)$.
Thus, 
$$
\pi_1(SO(3))\cong \text{ker}(\theta)=\{Id, R_1^{\, 4}\}\cong \Zz.
$$
\end{proof}
Since $R_1^{\, 4}$ is a central element of order two, the group generated by it, $\{Id, R_1^{\, 4}\}$, belongs to the center $Z(G)$. It is not obvious a priori that there are no other central elements. We make the following observation: the center of the braid group $B_n$ is known to be isomorphic to $\Z$. It is generated by a full twist of all $n$ strands --- $(\sigma_1\sigma_2\cdots\sigma_{n-1})^n$ (using $\sigma_i$ for the generators of $B_n$). Since $G$ is obtained from $B_n$ by imposing one more relation, any central element in $B_n$ will be central in $G$, although there is no guarantee that we will obtain a nontrivial element in $G$ in this way. In addition, there may be central elements in $G$ which come from non-central elements in $B_n$. When $n=3$ the above argument ensures that $(R_1R_2)^3$ is a central element in $G$. The calculation in Proposition \ref{R4} shows that 
$$
(R_1R_2)^3=R_1R_2R_1R_2R_1R_2=R_1^{\, 4}=R_2R_1R_2R_1R_2R_1=R_2^{\, 4},
$$
so we do not get any new central element, different from the one we have already found. When $n=4$, however the analogous calculation, using the braid relations plus the additional relations as in equation \ref{add}, gives
$$
(R_1R_2R_3)^{\, 4}=R_1^{\, 2}R_3^{-2}.
$$
This is another central element, different from $R_1^{\, 4}=R_2^{\, 4}=R_3^{\, 4}$. Taking the product of the two we obtain a third central element $R_1^{\, 2}R_3^{\, 2}$. Therefore when $n=4$, $Z(G)$ contains (in fact coincides with) the product of two copies of $\Zz$. This obviously generalizes to any even $n$ --- the element $R_1^{\, 2}R_3^{\, 2}\cdots R_{n-1}^{\, 2}$ is central, as can be checked explicitly.\par
The difference between even and odd dimensions can be traced back to the difference between rotation groups in even and odd dimensions. As we shall see shortly, when we factor $G$ by $\text{ker}\,\theta$ we obtain the rotational hyperoctahedral group. In odd dimensions this has trivial center, which follows for example from Schur's lemma and the fact that the matrix $-\mathbf{1}$ is not a rotation. However, in even dimensions reflection of all axes, given by $-\bold{1}$, is a rotation and therefore the corresponding rotational hyperoctahedral group has a nontrivial center.\par
According to the general construction, we expect that when we factorize $G$ by the kernel of the covering map, which is nothing but $\text{ker}\, \theta$, we should obtain the rotational octahedral group. This can also be established directly. Denoting by $G_1$ the quotient, we have a presentation for it:
\begin{equation}\label{S4}
G_1=\left<R_1, R_2 \,|\, R_1R_2R_1=R_2R_1R_2, R_1^{\, 2}=R_2R_1^{\, 2}R_2, R_1^{\, 4}=Id\right>.
\end{equation}
The order of $G_1$ is 24. We can list its elements using the same expressions as in Proposition \ref{elements}, except that now the two integers $n$ and $m$ run from $0$ to $3$. An easy calculation shows that $G_1$ contains 
nine elements of order 2, eight elements of order 3, and six elements of order 4. Among all 15 classified groups of order 24, the only one that has this structure is the symmetric group $S_4$, which on the other hand is the group of the octahedral rotational symmetries.
 Given the structure of $G$ that can be visualized through its Cayley graph, we conclude that it is a nontrivial extension by $\Zz$ of $S_4$ described by a non-split short exact sequence
$$
1\longrightarrow \Zz\longrightarrow G\longrightarrow S_4\longrightarrow 1\ .
$$
The non-isomorphic central extensions by $\Zz$ of $S_4$ are in one-to-one correspondence with the elements of the second cohomology group (for trivial group action) of $S_4$ with coefficients in $\Zz$ and the latter is known to be isomorphic to the Klein four-group $\Zz\times \Zz$. The identity in cohomology corresponds to the trivial extension $\Zz\times S_4$ (more generally, a semidirect product is also considered trivial). The other three elements of the cohomology group classify the three non-isomorphic nontrivial extensions, namely the binary octahedral group $2O$, the group $GL(2,3)$
of nonsingular $2\times 2$ matrices over the field with three elements, and the group $SL(2,4)$ of $2\times 2$ matrices with unit determinant over the ring of integers modulo 4.\par\noindent 
\begin{prop}\label{2O}
$G$ is isomorphic to the binary octahedral  group $2O$.
\end{prop}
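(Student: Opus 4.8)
The plan is to single out $2O$ from the three nontrivial central extensions of $S_4$ by $\Zz$ listed above by one group-theoretic invariant: the number of elements of order two. Realized inside the unit quaternions, $2O$ contains $-\mathbf 1$ as its \emph{only} involution, since a unit quaternion $q$ with $q^2=\mathbf 1$ must satisfy $q=\pm\mathbf 1$. By contrast $GL(2,3)$ and $SL(2,4)$ each have several: besides $-\mathbf 1$ one has, for example, $\mathrm{diag}(1,-1)$ in $GL(2,3)$ and $\bigl(\begin{smallmatrix}1&2\\0&1\end{smallmatrix}\bigr)$ over $\bBZ/4\bBZ$, both of order two. Hence it suffices to prove that $G$ has exactly one element of order two, namely the central element $R_1^{\,4}=R_2^{\,4}$.

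To count the involutions I would sort them through the homomorphism $\theta:G\to S_6$, whose image is the rotational octahedral group $\cong S_4$ and whose kernel is $\{Id,R_1^{\,4}\}$. If $x^2=Id$ then $\theta(x)$ has order $1$ or $2$ in $S_4$. When $\theta(x)=Id$ we get $x\in\ker\theta$, leaving only $x=Id$ and $x=R_1^{\,4}$. When $\theta(x)$ has order two, $x^2\in\ker\theta$, and I claim the value of $x^2$ depends only on the $S_4$-conjugacy class of $\theta(x)$: if $\theta(y)=\theta(s)\theta(x)\theta(s)^{-1}$ then $y$ and $sxs^{-1}$ are two lifts of the same element, so $y=sxs^{-1}$ or $y=R_1^{\,4}sxs^{-1}$, and because $R_1^{\,4}$ is central of order two while $x^2\in\ker\theta$ is central, in either case $y^2=sx^2s^{-1}=x^2$. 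Since $S_4$ has exactly two classes of involutions---the three double transpositions and the six transpositions---only two representatives must be examined.

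The double-transposition case is immediate: $R_1^{\,2}$ maps to the half-turn $r_1^{\,2}=(25)(34)$ and $(R_1^{\,2})^2=R_1^{\,4}$. For the transpositions, which are the $\pi$-rotations about edge axes (Type 2), I would take the lift $R_1R_2R_3=R_1R_2R_1R_2R_1^{-1}$ of $r_1r_2r_3$, where $R_3=R_1R_2R_1^{-1}$. Applying the braid relation and then $R_2^{\,2}R_1^{-1}=R_1R_2^{\,2}$ reduces this to $R_1R_2R_3=R_2R_1^{\,2}R_2^{\,2}$, after which a short manipulation using (\ref{add}) and $R_2^{\,4}=R_1^{\,4}$ yields $(R_1R_2R_3)^2=R_1^{\,4}$. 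Both lifts therefore have order four, so no involution of $S_4$ lifts to an involution of $G$, and $R_1^{\,4}$ is the unique element of order two in $G$; consequently $G\cong 2O$. The only genuinely computational step is this last reduction, which is a finite manipulation guaranteed to terminate by the canonical form of Proposition \ref{elements}; the point that must be checked with care is precisely that the answer is $R_1^{\,4}$ and not $Id$, which is exactly what separates $2O$ from its competitors.

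A cleaner, self-contained variant avoids the classification altogether by exhibiting the isomorphism by hand. Sending $R_1\mapsto\tfrac1{\sqrt2}(1+i)$ and $R_2\mapsto\tfrac1{\sqrt2}(1+j)$ into the unit quaternions, one checks directly that $q_1^{\,2}=i$, $q_2^{\,2}=j$, $q_2q_1^{\,2}q_2=i$, and $q_1q_2q_1=q_2q_1q_2=\tfrac1{\sqrt2}(i+j)$, so that both the braid relation and $R_1^{\,2}=R_2R_1^{\,2}R_2$ hold and we obtain a homomorphism $G\to 2O$. Its image contains $i,j,k$, the order-eight element $q_1$, and the order-six element $q_1q_2=\tfrac12(1+i+j+k)$, so its order is divisible by $24$; since the only subgroup of $2O$ of order $24$ is the binary tetrahedral group $2T$, whose elements have order at most $6$, the presence of $q_1$ forces the image to be all of $2O$. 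As $|G|=|2O|=48$, the map is an isomorphism. In either route I expect the obstacle to be bookkeeping rather than ideas: in the first, matching the abstract generators to the correct half-turns and carrying out the word reduction; in the second, confirming surjectivity of the explicit map.
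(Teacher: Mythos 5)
Both of your routes are sound, and they sit on opposite sides of the paper's own argument. Your second, ``self-contained'' variant is essentially the proof in the paper: the authors likewise pick two order-eight unit quaternions (they use $u_1=\tfrac1{\sqrt2}(1-k)$, $u_2=\tfrac1{\sqrt2}(1-j)$ where you use $\tfrac1{\sqrt2}(1+i)$, $\tfrac1{\sqrt2}(1+j)$), verify the braid relation and $u_1^{\,2}=u_2u_1^{\,2}u_2$ by quaternion arithmetic, assert that the pair generates $2O$, and conclude from $|G|=|2O|=48$. Your surjectivity argument (order divisible by $24$ via the elements $i,j,k$, $q_1q_2$ of order $6$ and $q_1$ of order $8$, plus the fact that the unique index-two subgroup of $2O$ is $2T$, which has no element of order $8$) is actually more explicit than the paper's ``a direct verification shows,'' and I verified your identities $q_1^{\,2}=i$, $q_2q_1^{\,2}q_2=i$, $q_1q_2q_1=q_2q_1q_2=\tfrac1{\sqrt2}(i+j)$. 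Your first route --- isolating $2O$ among the central extensions of $S_4$ by $\Zz$ as the one with a unique involution, and then counting involutions in $G$ by lifting the two conjugacy classes of involutions of $S_4$ through $\theta$ --- is genuinely different from the paper and also correct: the reduction to one representative per class via centrality of $\ker\theta$ is right, and the computations $(R_1^{\,2})^2=R_1^{\,4}$ and $R_1R_2R_3=R_2R_1^{\,2}R_2^{\,2}=R_1^{\,2}R_2$, hence $(R_1R_2R_3)^2=R_1^{\,2}(R_2R_1^{\,2}R_2)=R_1^{\,4}$, check out. What each buys: the quaternionic route is self-contained given $|G|=48$ and is what the paper does; the involution-counting route avoids exhibiting an explicit isomorphism but leans on two inputs the paper states without proof, namely that $H^2(S_4;\Zz)\cong\Zz\times\Zz$ and that the three nontrivial cohomology classes yield exactly the three pairwise non-isomorphic groups $2O$, $GL(2,3)$, $SL(2,4)$ --- so it is a legitimate alternative but not more elementary, and it additionally identifies $G$ as a group with a unique involution, which is a nice by-product (it is the finite analogue of $-\mathbf 1$ being the only involution of $SU(2)$).
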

\noindent{\bf Note:}  By construction, we expect the group $G$ to be a subgroup of $SU(2)$ and the covering map $G\rightarrow S_4$ to be a restriction of the covering map $SU(2)\rightarrow SO(3)$. The fact that $G$ with the presentation (\ref{Biocta}) is isomorphic to $2O$ is mentioned in Section 6.5 of \cite{Cox}. The GAP ID of $G$ is [48, 28].
\begin{proof}
The binary octahedral group, being a subgroup of $SU(2)$, can be realized as a group of unit quaternions. In fact it consists of the 24 Hurwitz units 
$$\{\pm1, \pm i, \pm j, \pm k, \frac{1}{2}(\pm 1 \pm i \pm j \pm k)\}$$ and the following 24 additional elements: 
$$\{\frac{1}{\sqrt{2}}(\pm 1 \pm i),\frac{1}{\sqrt{2}}(\pm 1 \pm j),\frac{1}{\sqrt{2}}(\pm 1\pm k), \frac{1}{\sqrt{2}}(\pm i \pm j),\frac{1}{\sqrt{2}}(\pm i\pm k),
\frac{1}{\sqrt{2}}(\pm j\pm k)\}$$ 
To prove the isomorphism it is enough to find two elements in $2O$ which generate the whole group and which satisfy the same relations as the defining relations of $G$. Obviously we are looking for elements of order 8.
Let, e.g., $u_1=\frac{1}{\sqrt{2}}(1-k), u_2=\frac{1}{\sqrt{2}}(1-j)\in 2O$. It is a simple exercise in quaternion algebra to prove that  $u_2u_1u_2=u_1u_2u_1$. Similarly, we check that $u_1^{\, 2}=u_2u_1^{\, 2}u_2$, or equivalently that $u_1^{\, 2}u_2=u_2^{-1}u_1^{\, 2}$. Indeed, since $u_1^{\, 2}=-k$ and since $k$ and $j$ anticommute,
$$
u_1^{\, 2}u_2=-k\frac{1}{\sqrt{2}}(1-j)=\frac{1}{\sqrt{2}}(1+j)(-k)=u_2^{-1}u_1^{\, 2} \ .
$$
A direct verification further shows that the whole $2O$ is generated by $u_1$ and $u_2$.
\end{proof}
The 24 Hurwitz units, when considered as points in $\bBR ^4$, lie on the unit sphere $S^3$ and are the vertices of a regular 4-polytope   --- the 24-cell, one of the exceptional regular polytopes with symmetry --- the Coxeter group $F_4$. The set is also a subgroup of $2O$ --- the binary tetrahedral group, denoted as $2T$. The second set of 24 unit quaternions can be thought of as the vertices of a second 24-cell, obtained from the first one  by a rotation,  given by multiplication of all Hurwitz units by a fixed element, e.g. ${1\over \sqrt{2}}(1+i)$. The convex hull of all 48 vertices is a  4-polytope, called disphenoidal 288-cell.\par
It may be instructive to consider the symmetric group $S_4$ with its presentation given by equation (\ref{S4}) and try to construct explicitly all non-isomorphic central extensions by $\Zz$. This means that the three relations in the presentation of $S_4$ must now be satisfied up to a central element, belonging to the (multiplicative) cyclic group with two elements $\{1, -1\}$:
$$
R_1R_2R_1=aR_2R_1R_2,\quad R_1^{\, 2}=bR_2R_1^{\, 2}R_2,\quad R_1^{\, 4}=c,\quad a,b,c\in \Zz.
$$
The element $a$ in the first relation can be absorbed by replacing $R_1$ by $aR_1$, so Artin's braid relation remains unchanged. We are left with four choices for $b$ and $c$:\\
1. The choice $b=c=1$ leads to the trivial extension as a direct product $\Zz\times S_4$.\\ \smallskip
2. The choice $b=1, c=-1$ leads to the already familiar group 
$$
G\cong 2O=\left<R_1, R_2\,|\,R_1R_2R_1=R_2R_1R_2, R_1^{\, 2}=R_2R_1^{\, 2}R_2\right>.
$$
3. The choice $b=c=-1$ leads to the group  $GL(2,3)$ with presentation
$$
GL(2,3)=\left<R_1, R_2\,|\,R_1R_2R_1=R_2R_1R_2, R_1^{\, 2}=R_2R_1^{\, 6}R_2, R_2R_1^{\, 4}=R_1^{\, 4}R_2\right>.
$$
4. The choice $b=-1, c=1$ leads to the group $SL(2,4)$ with presentation 
$$
SL(2,4)=\left<R_1, R_2,b\,|\,R_1R_2R_1=R_2R_1R_2, R_1^{\, 2}=bR_2R_1^{\, 2}R_2,R_1^{\, 4}=b^2=1\right>.
$$
or, after simplifications, to
$$
SL(2,4)=\left<R_1, R_2\,|\,R_1R_2R_1=R_2R_1R_2, R_1^{\, 4}=(R_1R_2)^6=1\right>.
$$
The proofs of points 3 and 4 above repeat the logic of the proof of Proposition \ref{2O}. For the group $GL(2,3)$ we can make the following identifications
$$
R_1=\begin{pmatrix}
1&1\\
1&0
\end{pmatrix},
R_2=\begin{pmatrix}
1&2\\
2&0
\end{pmatrix}\in GL(2,3)
$$
and then check that they generate the whole $GL(2,3)$ and satisfy the respective relations. Similarly, for $SL(2,4)$ we can set
$$
R_1=\begin{pmatrix}
1&0\\
1&1
\end{pmatrix},
R_2=\begin{pmatrix}
3&3\\
0&3
\end{pmatrix}
\in SL(2,4).
$$
Notice that there is an essential difference between the extensions described in 2 and 3, and the extension in 4. The extensions in 2 and 3 have presentations as the presentation of $S_4$ (equation \ref{S4}) with the same set of generators and some relation removed. This is not the case with $SL(2,4)$ where, if we want to keep the form of the relations, we need three generators. Notice also that for $2O$ and $GL(2,3)$ the order of $R_1$ and $R_2$ becomes 8 (it is 4 in $S_4$), while in $SL(2,4)$ the order of $R_1$ and $R_2$ remains 4. (It may be worth pointing out that the properties of the braid group $B_n$ ensure that in any of its factors the order of any two (standard) generators has to be the same.) \par
\begin{figure}[h!]
\centering
\includegraphics[width=50mm]{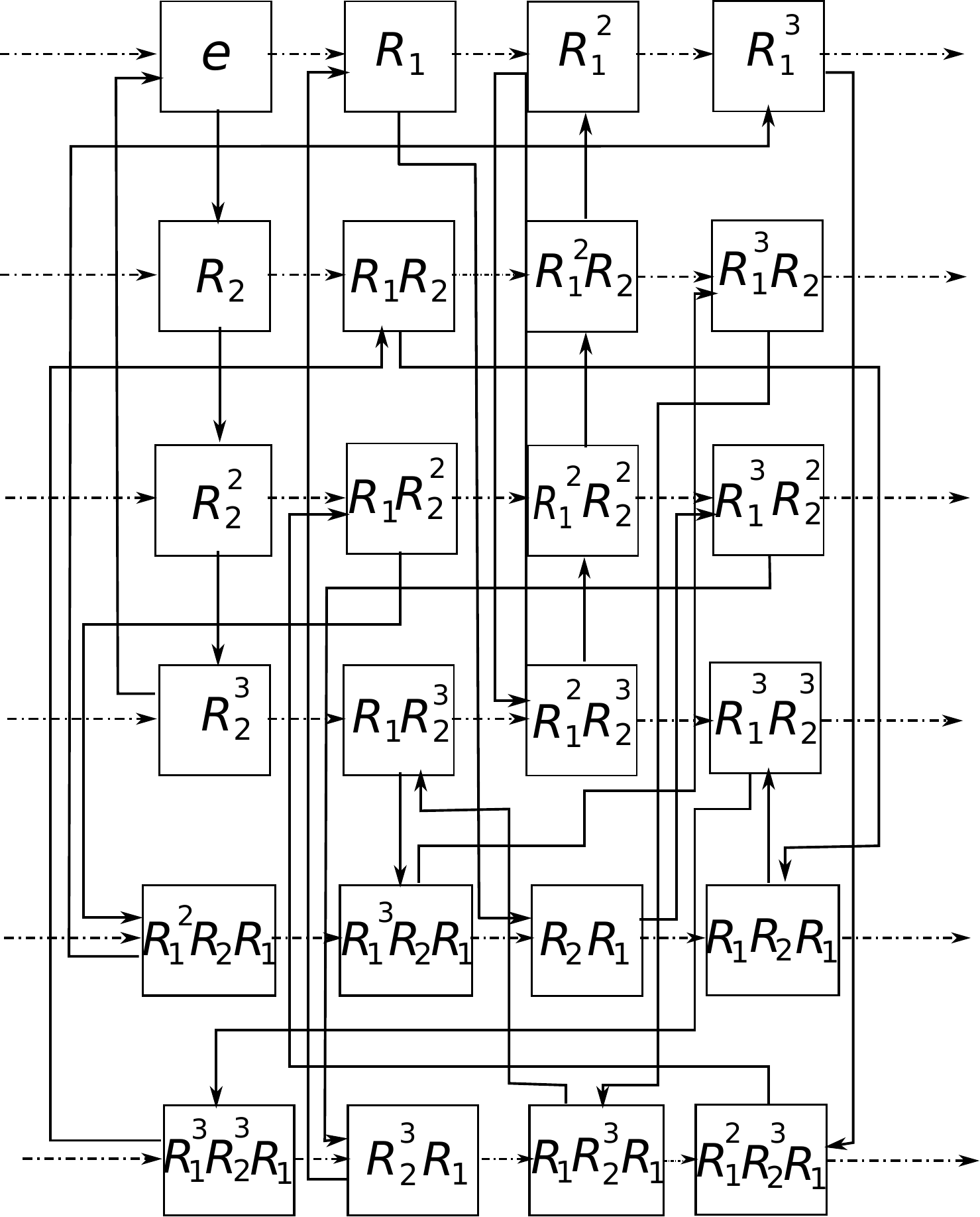}
\caption{The Cayley graph of $S_4$} \label{Cayley24}
\end{figure}
The groups $2O$ and $GL(2,3)$ are so-called {\it stem extensions} of $S_4$, i.e., the abelian group by which we extend is not only contained in the center of the extended group, but also in its commutator subgroup. One can check explicitly that for $2O$ and $GL(2,3)$ the corresponding central element $R_1^{\, 4}=R_2^{\, 4}$ can be written as a commutator, while in the case $SL(2,4)$ the center is generated by $(R_1R_2)^3$ and it is not in the commutator subgroup. If one looks at the Cayley graph of $2O$ (Figure \ref{Cayley}), one can see that it really looks like a (topological) double cover of the corresponding Cayley graph of $S_4$ (Figure \ref{Cayley24} ). For this reason stem extensions are called covering groups and are discrete versions of covering groups of Lie groups. It is intriguing that while $SO(3)$ has $SU(2)$ as its unique (double) cover, the finite subgroup $S_4$ has one additional double cover, namely $GL(2,3)$, which does not come from lifting $SO(3)$ to $SU(2)$.\par
Finally, we may notice that in the presentations of $2O$ and $GL(2,3)$ there is no condition imposed on the order of the central element. Its order comes out to be 2 automatically, which means that $S_4$ does not admit bigger stem extensions and the two groups considered are maximal stem extensions, i.e., {\it Schur extensions}.
\section{Generalization to arbitrary $\boldsymbol{n}$}
The $n$-dimensional case is an easy generalization of the three-dimensional one. We consider products of generating paths $R_{ij}$ in $SO(n)$ as defined in Section 2. If we take a closed path of the form
$R_{ij}R_{kl}R_{ij}^{-1}R_{kl}^{-1}$, where all four indices are different, it is clear that this is contractible as these are rotations in two separate planes and at the level of homotopy classes the generators $R_{ij}$ and $R_{kl}$ will commute. When one of the indices coincides, we have a motion that takes place in a 3-dimensional subspace of $\Rn$ and we may use the algebraic relations we had in the previous section. In particular, if we consider the elements $R_{ij}$, $R_{jk}$ and $R_{ki}$, we can express one, e.g. the third, in terms of the other two, just as we expressed $R_3=R_{12}$ as the conjugation of $R_2=R_{31}$ by $R_1=R_{23}$. At this point, it seems convenient to choose a different notation, where $R_1:=R_{12}, 
R_2:=R_{23},\dots, R_{n-1}:=R_{n-1\,n}$. With this notation we have $R_{31}=R_{13}^{-1}=R_1R_2R_1^{-1}$, 
$R_{41}=R_{13}R_{34}R_{13}^{-1}=R_1R_2^{-1}R_1^{-1}R_3R_1R_2R_1^{-1}$, etc. In this way, all $R_{ij}$ are products of the $n-1$ generators $R_i$ and their inverses. We have $R_iR_j=R_jR_i$ when $|i-j|\ge 2$. Further, since local closed paths that correspond to rotations in any 3-dimensional subspace must be set to identity when passing to homotopy classes, we have identities analogous to the ones in equation \ref{Id} for any two generators $R_i$ and $R_{i+1}$. In particular, Artin's braid relation is satisfied for any $i=1,\dots , n-2$:
$$
R_iR_{i+1}R_i=R_{i+1}R_iR_{i+1}.
$$
The other relation must also hold:
$$
R_iR_{i+1}^{\, 2}R_i=R_{i+1}^{\, 2}.
$$
The fact that there are no additional relations follows from the observation that any contractible closed path in $SO(n)$ which is a product of generating paths can be written as a product of "triangular" local closed paths of the type described in Section 3 (see Appendix).\par
The properties of the braid group lead to some interesting restrictions on the type of additional relations that can be imposed on the generators. In particular, if $P(R_1,R_2)=1$ is some relation involving the first two generators and their inverses, then it follows that $P(R_i,R_{i+1})=1$ (translation) and $P(R_{i+1},R_i)=1$ (symmetry) will be satisfied automatically. These have a simple geometric explanation. For example,  the first relation involves the first three strands of the braid, so if we want to prove the relation for $R_2$ and $R_3$, we flip the first strand over the next three, apply the relation and then flip back the former first strand to the first place. The symmetry property follows from the first and the fact that the braid group has an outer automorphism $R_i\rightarrow R_{n-i}$ which correspond to "looking at the same braid from behind." It is also clear that the order of all generators in the factor group will be the same. Therefore, for any $n$ the group $G$ generated by the  generating paths, up to homotopy, has presentation
\begin{small}
\begin{equation}\label{n-dim}
 G=\left< R_1,\dots,R_{n-1}\,|R_iR_{i+1}R_i=R_{i+1}R_iR_{i+1}; \\
 R_iR_j=R_jR_i, |i-j|\ge 2; R_1^{\, 2}=
R_2R_1^{\, 2}R_2\right>.
\end{equation}
\end{small}
The group $G$ for arbitrary $n$ has many features in common with the case $n=3$. In particular, all $R_i$ have order $8$ and $R_1^{\, 4}=R_2^{\, 4}=\cdots=R_{n-1}^{\, 4}$ is central.
The Todd-Coxeter algorithm when run on the computer gives the following results for the order of $G$ when $n=3, 4, 5, 6$ respectively --- 48, 384, 3840, 46080. In fact, we have $|G|=2^nn!$, which will be shown next. As mentioned already in Section 3, the full hyperoctahedral group in dimension $n$, which is the Coxeter group $B_n$, has the same order, but $G$ is a different group --- it is a non-trivial double cover of the orientation-preserving subgroup of the full hyperoctahedral group.\par
A canonical form of the elements of $G$ can be defined inductively as follows:\\
Let $x_{(i)}\in G$ denote a word which contains no $R_j$ with $j>i$. Then we will say that $x_{(i)}$ is in canonical form if it is written as $x_{(i)}=x_{(i-1)}y_{(i)}$ with $x_{(i-1)}$ being in canonical form and $y_{(i)}$ being an expression of one of the types:\\
1. $R_i^{\, k},\ k\in\{0,1,2,3\}$,\\
2. $R_iR_{i-1}\cdots R_{i-j},\ j\in\{1,\dots,i-1\}$,\\
3.  $R_i^{\, 3}R_{i-1}\cdots R_{i-j},\ j\in\{0,\dots,i-1\}$,\\
As an example, let us list all elements of $G$ in the case $n=4$, by multiplying all canonical expressions containing $R_1$ and $R_2$ (see Proposition \ref{elements}) with all the expressions as above, with $i=3$. We have $R_1^{\, m}R_2^{\, n}R_3^{\, k}$ (128 elements), $R_1^{\, m}R_2^{\, n}R_3R_2$ (32 elements), $R_1^{\, m}R_2^{\, n}R_3R_2R_1$ (32 elements), $R_1^{\, m}R_2^{\, n}R_3^{\, 3}R_2$ (32 elements), $R_1^{\, m}R_2^{\, n}R_3^{\, 3}R_2R_1$ (32 elements),  $R_1^{\, m}R_2R_1R_3^{\, k}$ (32 elements),  $R_1^{\, m}R_2R_1R_3R_2$ (8 elements),
 $R_1^{\, m}R_2R_1R_3R_2R_1$ (8 elements),  $R_1^{\, m}R_2R_1R_3^{\, 3}R_2$ (8 elements), 
$R_1^{\, m}R_2R_1R_3^{\, 3}R_2R_1$ (8 elements), \\
$R_1^{\, m}R_2^{\, 3}R_1R_3^{\, k}$ (32 elements),  $R_1^{\, m}R_2^{\, 3}R_1R_3R_2$ (8 elements),
 $R_1^{\, m}R_2^{\, 3}R_1R_3R_2R_1$ (8 elements),  $R_1^{\, m}R_2^{\, 3}R_1R_3^{\, 3}R_2$ (8 elements),
$R_1^{\, m}R_2^{\, 3}R_1R_3^{\, 3}R_2R_1$ (8 elements).
The following is a straightforward generalization of Proposition \ref{elements}.
\begin{prop}
Any element of $G$ can be written uniquely in the canonical form defined above. The number of elements is  $2^nn!$.
\end{prop}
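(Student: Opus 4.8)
The plan is to prove existence and uniqueness of the canonical form simultaneously, by induction on $n$, with Proposition \ref{elements} serving as the base case $n=3$; the count $|G|=2^nn!$ will then be a bookkeeping corollary. For existence I would show that the set of canonical words is closed under right multiplication by each generator $R_i^{\pm1}$, exactly as in the verification sketched for $n=4$. The only tools available are the braid relations, the far-commutativity $R_iR_j=R_jR_i$ for $|i-j|\ge2$, and the extra relation (\ref{add}) in its rewritten form $R_{i+1}^{\,2}R_i=R_i^{-1}R_{i+1}^{\,2}$. Appending an $R_i^{\pm1}$ to $x_{(n-2)}y_{(n-1)}$, one slides it left past the blocks it does not interact with, uses the $n=3$ identities among $R_{n-2},R_{n-1}$ to restore one of the three shapes of $y_{(n-1)}$, and invokes the inductive hypothesis to renormalize the lower block $x_{(n-2)}$; the one genuinely new phenomenon is that any factor $R_{n-1}^{\,4}=R_1^{\,4}$ produced along the way is central and gets absorbed into $x_{(n-2)}$. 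This shows every element equals one of the canonical words, so $|G|\le N$, where $N$ is their number.

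For the count I would use that a canonical word is a product $y_{(1)}y_{(2)}\cdots y_{(n-1)}$ of independently chosen blocks, whence $N=\prod_{i=1}^{n-1}c_i$ with $c_i$ the number of admissible $y_{(i)}$. At the bottom level the central power $R_1^{\,4}$ has nothing beneath it to be absorbed into, so $y_{(1)}=R_1^{\,m}$ ranges over all $m\in\{0,\dots,7\}$ and $c_1=8$; for $i\ge2$ the three shapes contribute $4+(i-1)+(i-1)$, giving $c_i=2i+2$. The short telescoping computation $8\prod_{i=2}^{n-1}2(i+1)=2^nn!$ then yields $N=2^nn!$.

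The remaining and hardest step is to upgrade $|G|\le N$ to equality, which is exactly the uniqueness assertion. I would obtain the matching lower bound from the covering structure. The end-point map is a surjection $\theta\colon G\to Q$ onto the rotational hyperoctahedral group $Q$ (the $R_i$ hit the Type-1 generators), and $|Q|=2^{n-1}n!$, so $|G|=|\ker\theta|\cdot 2^{n-1}n!$; together with $|G|\le 2^nn!$ this already pins $|\ker\theta|$ to be $1$ or $2$. Since the central element $R_1^{\,4}$ lies in $\ker\theta$, everything comes down to showing $R_1^{\,4}\ne Id$, and the main obstacle is to do this without circularity, for $R_1^{\,4}\ne Id$ is in effect the nontriviality of $\pi_1(SO(n))$ that the paper is after. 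I would settle it by an independent realization in the Clifford algebra, generalizing the quaternionic argument of Proposition \ref{2O}: the assignment $R_i\mapsto\tfrac1{\sqrt2}(1+e_ie_{i+1})$ respects far-commutativity (disjoint index pairs commute), the braid relations (a standard Clifford computation), and the extra relation (one checks $\tfrac1{\sqrt2}(1+e_2e_3)\,e_1e_2\,\tfrac1{\sqrt2}(1+e_2e_3)=e_1e_2$), so it defines a homomorphism $G\to\mathrm{Pin}(n)$ under which $R_1^{\,4}\mapsto(e_1e_2)^2=-1\ne1$.

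Consequently $|\ker\theta|=2$ and $|G|=2^nn!=N$. Since existence exhibits a surjection from the set of $N$ canonical words onto a group of exactly $N$ elements, this surjection is forced to be a bijection, so each element of $G$ has a unique canonical form, completing the proof.
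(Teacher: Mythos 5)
Your proposal is correct, and while the existence step (closure of the set of canonical words under right multiplication by the generators) and the count $N=8\prod_{i=2}^{n-1}2(i+1)=2^nn!$ coincide with what the paper does, your treatment of uniqueness is a genuinely different and more complete argument. The paper disposes of uniqueness by declaring it "nothing conceptually different" from Proposition \ref{elements}, whose own uniqueness step was done by inspection and which ultimately leans on a Todd--Coxeter computation to certify that $R_1^{\,4}\ne Id$; no analogue of that computation is supplied for general $n$. You close this gap with a counting sandwich: existence gives $|G|\le N$; the surjection $\theta\colon G\to Q$ onto the rotational hyperoctahedral group of order $2^{n-1}n!$ gives $|G|=|\ker\theta|\cdot 2^{n-1}n!$, so $|\ker\theta|\le 2$; and the representation $R_i\mapsto\tfrac1{\sqrt2}(1+e_ie_{i+1})$ in the Clifford algebra shows $R_1^{\,4}\mapsto(e_1e_2)^2=-1\ne 1$, forcing $|\ker\theta|=2$, $|G|=N$, and hence bijectivity of the canonical-form map. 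Your Clifford verifications are sound: disjoint bivectors commute, $e_ie_{i+1}$ and $e_{i+1}e_{i+2}$ anticommute and square to $-1$, so the braid relation reduces to exactly the quaternionic computation of Proposition \ref{2O}, and $(1+e_2e_3)(e_1e_2)(1+e_2e_3)=2e_1e_2$ verifies the extra relation. What this buys is a self-contained, computer-free proof that simultaneously yields the Theorem ($\ker\theta\cong\Zz$, hence $\pi_1(SO(n))\cong\Zz$); the price is importing a concrete model of $Spin(n)$, which is in tension with the paper's stated wish to avoid the classical covering-group machinery --- though you use it only as finite-dimensional algebra, to witness a single inequality, and it is the natural $n$-dimensional extension of the quaternion argument the paper itself employs for $n=3$. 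Two small points to tidy up: state explicitly that the endpoints $r_i=R_i(1)$ generate all of $Q$ (this is implicit in Lemma 1 and easy to check directly), and note, as you implicitly do with $c_1=8$, that the bottom block $y_{(1)}$ must be allowed to range over $R_1^{\,m}$, $m\in\{0,\dots,7\}$, since the literal inductive definition in the paper would otherwise only give four choices there.
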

\begin{proof}
The idea is to show that by multiplying an element in canonical form on the right by any $R_j$, one gets another element that can be brought to a canonical form as well. There is nothing conceptually different from the proof of Proposition \ref{elements} and we skip the details. In order to calculate the order of $G$, we notice that the elements in canonical form of type $x_{(i)}$ are obtained by all possible products of all elements of type $x_{(i-1)}$ with the $4+2(i-1) =2(i+1)$ different expressions of type $y_{(i)}$. Starting with $i=2$ where we have $48=2^33!$ and remembering that $i$ runs from $1$ to $n-1$ we get 
$$
|G|=2^3\cdot 3!\cdot 2\cdot 4\cdot 2 \cdot 5\cdots 2\cdot n=2^n n!.
$$
\end{proof}
The group $G$ consists of homotopy classes of paths in $SO(n)$ starting at the identity and ending at an element of $SO(n)$ which is a rotational symmetry of the hyperoctahedron in $n$ dimensions (also called $n$-orthoplex or $n$-cross polytope). In particular we have a homomorphism $\theta :G\rightarrow S_{2n}$ because we have a permutation of the $2n$ vertices of the hyperoctahedron (we take left action of the group on the set). Since the fundamental group of $SO(n)$ consists of the homotopy classes of closed paths, we investigate the kernel of $\theta$. 
\newtheorem*{theo}{Theorem}
\begin{theo}
$$
\pi_1(SO(n))\cong \text{ker}\theta=\{Id, R_1^{\, 4}\}\cong \Zz.
$$
\end{theo}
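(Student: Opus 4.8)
The first step is to identify $\ker\theta$ with $\pi_1(SO(n))$. By definition $\theta$ records the permutation of the $2n$ vertices $\pm e_1,\dots,\pm e_n$ induced by the endpoint of a representative path, so an element lies in $\ker\theta$ precisely when its endpoint fixes all $2n$ vertices; since these vertices span $\Rn$, such an endpoint is the identity rotation and the path is therefore closed. Conversely every closed path ends at $Id$, which is a hyperoctahedral element, and by Lemma 1 its homotopy class contains a product of generating paths, hence is an element of $G$ lying in $\ker\theta$. Distinct homotopy classes remain distinct in $G$, so $\ker\theta\cong\pi_1(SO(n))$ and it remains only to compute this subgroup.

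Two of the three needed facts are immediate. The element $R_1^{\,4}=R_{12}^{\,4}$ is a rotation by $2\pi$ in the $(1,2)$-plane, i.e.\ the identity rotation, so $\theta(R_1^{\,4})=Id$ and $R_1^{\,4}\in\ker\theta$; it is central (as already observed) and it is nontrivial, because $R_1^{\,4}$ and $Id$ are distinct canonical forms and the canonical form is unique. Hence $\{Id,R_1^{\,4}\}$ is a central subgroup of $\ker\theta$ isomorphic to $\Zz$. The real content is the reverse inclusion $\ker\theta\subseteq\{Id,R_1^{\,4}\}$, which I would prove by induction on $n$, the base case $n=3$ being the kernel computation already carried out in Section 3.

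For the inductive step, write an arbitrary $x\in G$ in the canonical form $x=x_{(n-2)}\,y_{(n-1)}$, where $x_{(n-2)}$ is a word in $R_1,\dots,R_{n-2}$ and $y_{(n-1)}$ is one of the three allowed tails built from $R_{n-1},\dots,R_1$. The key idea is to track the single vertex $e_n$. Since $x_{(n-2)}$ involves only rotations in the first $n-1$ coordinates, $\theta(x_{(n-2)})$ fixes $\pm e_n$; being a bijection fixing $e_n$, its only preimage of $e_n$ is $e_n$, so (using the left action, in which the rightmost factor of a product acts first) $\theta(x)$ fixes $e_n$ if and only if $\theta(y_{(n-1)})$ fixes $e_n$. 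Now inspect the tails: every chain factor $R_{n-2},\dots,R_{n-1-j}$ fixes $e_n$, while any odd power of $R_{n-1}=R_{n-1\,n}$ carries $e_n$ off the $n$-axis and $R_{n-1}^{\,2}$ sends $e_n$ to $-e_n$. Hence $\theta(y_{(n-1)})(e_n)=e_n$ forces $y_{(n-1)}=Id$, so that $x=x_{(n-2)}\in\langle R_1,\dots,R_{n-2}\rangle$.

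It remains to recognize this subgroup as a copy of the dimension-$(n-1)$ group $G_{n-1}$. The generators $R_1,\dots,R_{n-2}$ satisfy exactly the defining relations of $G_{n-1}$, giving a surjection $G_{n-1}\twoheadrightarrow\langle R_1,\dots,R_{n-2}\rangle$; comparing canonical forms shows that both sides have order $2^{\,n-1}(n-1)!$, so this is an isomorphism carrying $R_1^{\,4}$ to $R_1^{\,4}$. Under it $\theta$ restricts to the dimension-$(n-1)$ homomorphism acting on the first $2(n-1)$ vertices (the pair $\pm e_n$ being fixed), so $x\in\langle R_1,\dots,R_{n-2}\rangle$ with $\theta(x)=Id$ lies in the kernel of $\theta_{n-1}$, which by the inductive hypothesis equals $\{Id,R_1^{\,4}\}$. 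This yields $\ker\theta=\{Id,R_1^{\,4}\}\cong\Zz$, and combined with the first paragraph the theorem follows. The main obstacle I anticipate is not a single hard idea but the bookkeeping in this last step: verifying that the canonical forms of $G$ restrict correctly to those of $G_{n-1}$, and that the permutation action together with the product convention are handled consistently throughout the vertex-tracking argument.
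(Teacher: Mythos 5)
Your argument is correct and is essentially the paper's own proof: both write an element in the canonical form $x_{(n-2)}y_{(n-1)}$, observe that only $R_{n-1}$ moves the vertices $\pm e_n$ so that $\theta(x)$ fixes $e_n$ only if the tail $y_{(n-1)}$ is trivial, and then recurse down to a power of $R_1$, where $\theta(R_1)=(1\ 2\ {-1}\ {-2})$ forces $m\in\{0,4\}$. The only cosmetic difference is that you package the recursion as an induction on $n$ via the identification $\langle R_1,\dots,R_{n-2}\rangle\cong G_{n-1}$, whereas the paper simply iterates the vertex-tracking step inside the single group $G$.
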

\begin{proof}
We choose to enumerate the $2n$ vertices of the hyperoctahedron as $\{1,-1,2,$ \\ $-2,\dots n,-n\}$ taking $\pm 1$ to denote the two opposite vertices on the first axis, $\pm 2$ on the second axis, etc. Then the element $\theta(R_i)$ permutes cyclically only the elements $\{i, i+1,-i,-(i+1)\}$ leaving the rest in place. Consider an element  $x_{(n-1)}=x_{(n-2)}y_{(n-1)}\in G$ in canonical form. By looking at the three possible expressions for $y_{(n-1)}$ we see that the leftmost letter is $R_{n-1}$ to some power, possibly preceded (on the right) by a sequence of $R_i$s with decreasing $i$. Since in the word $y_{(n-1)}$ only $\theta(R_{n-1})$ moves $n$ and $-n$, we have $\theta(y_{(n-1)})(n)\ne n$, unless $y_{(n-1)}=1$. Now, because $x_{(n-2)}$ does not contain $R_{n-1}$, the number $n$ is not  the image of any number $k\ne n$ under the action of $\theta(x_{(n-2)})$. Therefore we have $\theta(x_{(n-1)})(n)\ne n$, unless $x_{(n-1)}=x_{(n-2)}$. Proceeding in this way we see that $x\in ker \,\theta$ if and only if $x=R_1^{\, m}$. Finally, as $\theta(R_1)=(1\ 2-1-2)$ (cyclic permutation of $\{1,2,-1,-2\}$; the rest fixed), the only possible cases are when $m=0, 4$. Therefore, $R_1^{\, 4}$ is the only nontrivial element of $ker\,\theta$ and it has order $2$.
\end{proof}
We obtained a series of finite groups $G$ from the braid groups $B_n$ by imposing one additional relation, namely $R_1^{\, 2}=R_2R_1^{\, 2}R_2$. These groups are nontrivial double covers of the corresponding rotational hyperoctahedral groups and have order $2^nn!$. It is quite obvious that we obtain a second, nonisomorphic series of double covers if we impose the relations $R_1^{\, 2}=
R_2R_1^{\, 6}R_2$ and $R_1^{\, 4}R_2=R_2R_1^{\, 4}$, instead. Note that for the second series we need the additional condition, which then implies that $R_1^{\, 4}=R_2^{\, 4}=\cdots=R_{n-1}^{\, 4}$ is central. When $n=3$ the two groups are the two Schur extensions of the base group. This is perhaps the case also for arbitrary $n$.
\section{Appendix}

{\it Proof of Lemmas 1 and 2.} It is helpful to introduce a function, measuring the (square of a) "distance" between two points on $SO(n)$. Let $X,\ Y\in SO(n)$ be written as $n\times n$ orthogonal matrices and define
$$
D(X,Y)\defeq \Tr({\mathbf 1}- X^TY)\ .
$$
It is a simple exercise to show that this is a positive-definite symmetric function on $SO(n)\times SO(n)$.
Notice that $(X^TY)_{ii}$ is the cosine of the angle between the image of the 
standard coordinate basis vector $\mathbf e_i$ under the action of $X$ and its image under the action of $Y$. The function $D$ does not satisfy the triangle inequality and is not a true distance, but this causes no difficulties in our considerations.\par
Further we write the proof for $SO(3)$ for brevity. It is obvious that the same method works in general. Let $X\in SO(3)$ be written as a $3\times 3$ orthogonal matrix:
$$ 
X=\begin{pmatrix}
x_{11}&x_{12}&x_{13}\\
x_{21}&x_{22}&x_{23}\\
x_{31}&x_{32}&x_{33}
\end{pmatrix}
\equiv\begin{pmatrix}
{\mathbf x_1}&{\mathbf x_2}&{\mathbf x_3}
\end{pmatrix}.
$$
The "distance" of $X$ to the identity (writing just one argument) is :
$$
D(X)=3-x_{11}-x_{22}-x_{33}.
$$

Thus $D(X)\ge 0$ and $D(X)=0$ implies $X=\mathbf 1$. The gradient of $D(X)$ is a 9-dimensional vector field, which we choose to write as a $3\times 3$ matrix. We have
$${\rm grad}\, D(X)=
\begin{pmatrix}
{\mathbf e_1}&{\mathbf e_2}&{\mathbf e_3}
\end{pmatrix}.
$$
$SO(3)$ is a 3-dimensional submanifold of $\bBR^9$ consisting of the points satisfying six algebraic equations, ensuring that the $3\times 3$ matrix $X$ is orthogonal. In vector form these are equivalent to the statement that the (column) vectors $\{{\mathbf x_1},{\mathbf x_2},{\mathbf x_3}\}$ form an orthonormal basis:
\begin{small}
$$
{\mathbf x_1}\cdot {\mathbf x_1}=1,\quad {\mathbf x_2}\cdot {\mathbf x_2}=1,\quad 
{\mathbf x_3}\cdot {\mathbf x_3}=1,\quad {\mathbf x_1}\cdot {\mathbf x_2}=0,\quad 
{\mathbf x_1}\cdot {\mathbf x_3}=0,\quad {\mathbf x_2}\cdot  {\mathbf x_3}=0.
$$ 
\end{small}
The respective gradients of these six functions are
$$
\begin{pmatrix}
{\mathbf x_1}&{\mathbf 0}&{\mathbf 0}
\end{pmatrix}, \quad
\begin{pmatrix}
{\mathbf 0}&{\mathbf x_2}&{\mathbf 0}
\end{pmatrix}, \quad
\begin{pmatrix}
{\mathbf 0}&{\mathbf 0}&{\mathbf x_3}
\end{pmatrix}, \quad
$$
$$
\begin{pmatrix}
{\mathbf x_2}&{\mathbf x_1}&{\mathbf 0}
\end{pmatrix},\quad
\begin{pmatrix}
{\mathbf x_3}&{\mathbf 0}&{\mathbf x_1}
\end{pmatrix},\quad
\begin{pmatrix}
{\mathbf 0}&{\mathbf x_3}&{\mathbf x_2}
\end{pmatrix}.
$$
It is well known and an easy exercise that for each $X\in SO(3)$ these six vectors are linearly independent and their orthogonal complement is precisely the tangent space of $SO(3)$. (This is in fact how it is shown that these six  equations in $\bBR^9$ define indeed a three-dimensional submanifold.) We want to show that under our assumptions ${\rm grad}\, D(X)$ has a nonzero tangential component. Indeed, supposing that ${\rm grad}\, D(X)$ is in the span of the six vectors above leads to
\begin{eqnarray}\nonumber
\begin{pmatrix}
{\mathbf e_1}&{\mathbf e_2}&{\mathbf e_3}
\end{pmatrix}&=&
a\begin{pmatrix}
{\mathbf x_1}&{\mathbf 0}&{\mathbf 0}
\end{pmatrix}+
b\begin{pmatrix}
{\mathbf 0}&{\mathbf x_2}&{\mathbf 0}
\end{pmatrix}+
c\begin{pmatrix}
{\mathbf 0}&{\mathbf 0}&{\mathbf x_3}
\end{pmatrix}\\\nonumber
&+&
d\begin{pmatrix}
{\mathbf x_2}&{\mathbf x_1}&{\mathbf 0}
\end{pmatrix}
+
e\begin{pmatrix}
{\mathbf x_3}&{\mathbf 0}&{\mathbf x_1}
\end{pmatrix}
+
f\begin{pmatrix}
{\mathbf 0}&{\mathbf x_3}&{\mathbf x_2}
\end{pmatrix},\\\nonumber
\end{eqnarray}
which in turn is equivalent to the three vector equations
$$
{\mathbf e_1}=a{\mathbf x_1}+d{\mathbf x_2}+e{\mathbf x_3},\quad
{\mathbf e_2}=b{\mathbf x_2}+d{\mathbf x_1}+f{\mathbf x_3},\quad
{\mathbf e_3}=c{\mathbf x_3}+e{\mathbf x_1}+f{\mathbf x_2}.
$$
Taking the scalar product of the first equation with ${\mathbf x_2}$ yields $d=x_{12}$, while taking the scalar product of the second equation with ${\mathbf x_1}$ yields $d=x_{21}$. In a similar way we see that $x_{ij}=x_{ji}$ for any $i,j$. Therefore the matrix $X$ must be symmetric and being also orthogonal its square is the identity. The eigenvalues can only be $1$ and $-1$ but the latter is excluded by the assumption that under the transformation corresponding to $X$ the coordinate axes do not leave the closed half-space they belong to initially.\par
Thus that the tangential component of $-{\rm grad}\, D(X)$ defines a vector field on $SO(3)$ which will be nonzero for any $X=R(t)$ where $R:[0,1]\rightarrow SO(3)$ is a local closed path. This means that the flow along this vector field defines a homotopy from  $R(t)$ to the identity of $SO(3)$. This proves the first part of Lemma 2.
\par
Let us denote by $G'\subset SO(n)$ the respective rotational hyperoctahedral group in dimension $n$.
Taking an arbitrary path $R: [0,1]\rightarrow SO(n)$ with $R(0)=Id$ and $R(1)=r\in G'$, we want to construct another path, homotopic to the first one, which is a product of generating paths $R_i$. We can proceed as follows: If $t_1\in [0,1]$ the smallest $t$ for which the "distance" from $R(t)$ to some $r_1\in G'$ becomes equal to the "distance" to $Id$, we take a product of generating paths $R_{k_1}^{(1)}\cdots R_1^{(1)}$ with $(R_{k_1}^{(1)}\cdots R_1^{(1)})(0)=Id$ and $(R_{k_1}^{(1)}\cdots R_1^{(1)})(t_1)=r_1$. Note that the transformation  $(R_{k_1}^{(1)}\cdots R_1^{(1)})(t)$ leaves every vertex of the hyperoctahedron in the closed half-space determined by it (as defined in Section 2) or, equivalently, the angle between the standard basis vector ${\mathbf e_i}$ and its image under 
$(R_{k_1}^{(1)}\cdots R_1^{(1)})(t)$ does not exceed $\pi/2$ for any $i$ and any $t$. Indeed, as we consider a continuous path in $SO(n)$ starting at the identity, we may think of the motion of the $n$ points on the $(n-1)$--dimensional sphere (the images of the vectors ${\mathbf e_i}$ under $R(t)$). Initially all points remain in some respective adjacent $(n-1)$-cells (these are $(n-1)$--simplices (spherical)) forming the spherical hyperoctahedron. If the point $i$ is to leave the closed half-space to which it initially belonged, it must reach the $(n-2)$-dimensional boundary opposite to ${\mathbf e_i}$ for some $t$. (At the same time there will be at least one more point $i'$ belonging to a boundary of an $(n-1)$-cell since two points cannot belong to the interior of the same $(n-1)$--cell).  But then it follows that there will be a vertex $j$ the angular distance  to which, from $i$  is less than or equal to $\pi/2$. This implies that the element $r_{ij}\in G'$ giving rotation by $\pi/2$ in the $ij$th plane is not further to $R(t)$ than the "distance" between $R(t)$ and the identity. The above argument shows that even though the element $r_1$ above does not determine the product $R_{k_1}^{(1)}\cdots R_1^{(1)}$ uniquely, it is unique up to homotopy.\par
Next, we take $t_2$ as the smallest $t\ge t_1$ for which the "distance" from $R(t)$ to some $r_2$ becomes equal to the "distance" to $r_1$. There is an element $r_2'$ in $G'$, such that $r_2=r_2'r_1$ and $r_2'$ satisfies the same property as $r_1$ above. We take a product of generating paths $R_{k_2}^{(2)}\cdots R_1^{(2)}$ with $(R_{k_2}^{(2)}\cdots R_1^{(2)})(t_1)=Id$ and $(R_{k_2}^{(2)}\cdots R_1^{(2)})(t_2)=r_2'$. Proceeding in this way and taking the product of products of generating paths we produce a path (after renumbering) $R'\defeq R_k\cdots R_1$ with $(R_k\cdots R_1)(0)=Id$ and $(R_k\cdots R_1)(1)=r$. 
The elements $r_i$ which we have to use at each step may not be unique and we will have to make a choice but the end result will lead to homotopic paths.
To show that the path we have constructed is homotopic to the original path $R$ we can use the following trick --- for each fixed $t$ take $R''(t)\defeq R'(t)R^{-1}(t)$. The path $R''$ is closed and is local by the construction of $R'$. Therefore it is homotopic to the identity by the first part of Lemma 2. With this Lemma 1 is proven.\par
Finally, we need to show that the expression corresponding to any contractible closed path consisting of generating paths  can be reduced to the identity by inserting in it words giving local closed paths. More precisely, the words that must be inserted give triangular closed paths as described in Section 3. This is essential as we must be sure that there are no additional relations other than the ones as in Equations \ref{Artin} and \ref{add}.
First we carry out the proof for local closed paths consisting of generating paths (these are contractible because of locality) by induction on the length of the word representing the path. A single-letter path $R_{ij}$ cannot be closed. A two-letter path can only be closed if the word is $R_{ji}R_{ij}$. A three-letter path cannot be closed as can easily be seen considering all possibilities. For example, if we want to try to close the path starting with $R_{23}R_{12}$, we have to add $R_{31}$ on the right so that vertex 1 goes  back to 1 but $R_{31}R_{23}R_{12}$ is not closed since 2 goes to -3 and 3 goes to -2. (We adopt enumeration of the $2n$ vertices with $\{1,2,\dots,n,-1,-2,\dots,-n\}$ where $-i$ denotes the vertex opposite to $i$. 
The element $R_{ij}$ moves vertex $i$ to vertex $j$, vertex $j$ to vertex  $-i$ and leaves all other vertices in place.)
Considering four-letter paths, it is easy to check that they can be closed either if they consist of a product of two closed two-letter paths or if they involve only rotations in a three-dimensional subspace spanned by some three axes $i$, $j$ and $k$.  
These are precisely what we called triangular closed paths in Section 3. In general notations it turns out that all such closed paths are words that are cyclic permutations of the following four expressions:
\begin{equation}\label{triang}
R_{kj}R_{ki}R_{jk}R_{ij},\quad R_{jk}R_{ik}R_{kj}R_{ij},\quad R_{ki}R_{jk}R_{ik}R_{ij},\quad 
R_{ik}R_{kj}R_{ki}R_{ij}.
\end{equation}
Notice that setting these expressions to one gives conjugation identities, e.g. $R_{kj}R_{ik}R_{jk}$ \\ $=R_{ij}$, etc.
Suppose now that the statement we want to prove is valid for all words with length $2n$ and consider a word of $2n+2$ letters. Suppose that the first letter on the right is $R_{ij}$. We can move $R_{ij}$ to the left across any $R_{kl}$ with $k$, $l$ different from $i$ and $j$, as rotations in two such planes commute. If $R_{ij}$ gets next to a word $R_{ji}$ we are allowed to cancel the two and obtain a word of length $2n$ and we are done. The possibility to obtain $R_{ij}R_{ij}$ is excluded by locality since anything to the right leaves vertex $i$ invariant and $R_{ij}^2$ sends $i$ to $-i$. There are four additional possibilities in which $R_{ij}$ gets next to a letter with which it does not commute. These are $R_{jk}R_{ij}$, $R_{kj}R_{ij}$,
$R_{ik}R_{ij}$ and $R_{ki}R_{ij}$. Using the identities following from setting the expressions in Equation (\ref{triang}) to one, we replace the above combinations by products of two letters in which the index $i$ appears only in the letter on the left. Here are the actual identities that can be used:
$$
R_{jk}R_{ij}=R_{ik}R_{jk},\quad R_{kj}R_{ij}=R_{ki}R_{kj},\quad R_{ik}R_{ij}=R_{ij}R_{kj},\quad 
R_{ki}R_{ij}=R_{ij}R_{jk}.
$$
Next, we continue moving the letter involving $i$ to the left. Notice that the index $i$ may now appear as the second index, so we may need to use the additional identities
$$
R_{jk}R_{ji}=R_{ki}R_{jk},\quad R_{kj}R_{ji}=R_{ik}R_{kj},\quad R_{ik}R_{ji}=R_{ji}R_{jk},\quad 
R_{ki}R_{ji}=R_{ji}R_{kj}.
$$
Eventually the letter involving the index $i$ will reach the left end of the word and there will be no letters involving $i$ to its right. Such a path cannot be closed as it does not leave vertex $i$ in place. Therefore, the only possibility is that the letter involving $i$ gets canceled in the process and the length of the word becomes $2n$.\par
Now consider a general closed path $R$ consisting of generating paths and homotopic to the identity. In general the homotopy from the identity to $R$ need not pass through paths consisting of generating paths but we can modify it so that it does. Indeed, if $R(s)$ is the homotopy from the identity to $R$, i.e. $R(0)\equiv Id$, $R(1)=R$, for each $s$ we can homotope $R(s)$ to the nearest closed path consisting of generating paths, as described earlier in this section. For $s$ small enough the nearest will be $Id$. For some $s_1$ the construction will yield a path $R'$ consisting of generating paths, such that $R'$ is homotopic to $R(s_1)$. Furthermore, the construction is such that $R'$ is local closed path as any vertex remains in some $(n-1)$-cell without leaving it. Then for some $s_2>s_1$ we will get $R''$ which is nearest to $R(s_2)$. The path $R''$ is not local but $R'^{-1}R''$ (usual product of paths by concatenation) is local. Since the words giving $R'$ and $R'^{-1}R''$ can be reduced to the identity by inserting words giving triangular closed paths, the same will be true for $R''$. Proceeding in this way, after a finite number of steps we show that the word for the closed path $R$ has the same property.

Acknowledgement \\
We would like to thank Tatiana Gateva-Ivanova for some helpful advice and suggestions.

\end{document}